\newcommand{\mbf}[1]{\pmb{#1}}
\newcommand{\bigtau}{\ensuremath{\mathlarger{\mathsf{\uptau}}}}
\newcommand{\smalltau}{\ensuremath{\tau}}
\renewcommand{\div}{\ensuremath{\mathrm{div}}}
\DeclareMathOperator{\tr}{tr}
\definecolor{linkcolor}{rgb}{0.5,0.0,0.0}
\definecolor{citecolor}{rgb}{0.0,0.5,0.0}
\definecolor{urlcolor} {rgb}{0.0,0.0,0.5}
\title{Superintegrable systems on conformal surfaces}
\subjclass[2010]{
	Primary
	30F45;  
	Secondary
	14H70,  
	70H06,  
	70H33.  
}
\author{Jonathan Kress$^\sharp$}
\author{Konrad Schöbel$^*$}
\author{Andreas Vollmer$^{\sharp\flat\mathsection\star}$}
\email{j.kress@unsw.edu.au}
\email{konrad.schoebel@htwk-leipzig.de}
\email{andreas.vollmer@uni-hamburg.de} 
\email{andreas.d.vollmer@gmail.com}
\address[$\sharp$]{%
	School of Mathematics and Statistics \\
	University of New South Wales \\
	Sydney 2052 \\
	Australia
}
\address[$^*$]{%
	Faculty of Digital Transformation \\
	HTWK Leipzig University of Applied Sciences \\
	04251 Leipzig \\
	Germany
}
\address[$^\flat$]{%
	Institut f\"ur Geometrie and Topologie \\
	Universit\"at Stuttgart \\
	70049 Stuttgart \\
	Germany
}
\address[$^\mathsection$]{%
	Dipartimento di Scienze Matematiche
	"Giuseppe Luigi Lagrange" \\
	Politecnico di Torino \\
	Corso Duca degli Abruzzi, 24 \\
	10129 Torino \\
	Italy
}
\address[$^\star$]{%
	Fachbereich Mathematik
	Universit\"at Hamburg \\
	Bundesstra{\ss}e 55 \\
	20146 Hamburg \\
	Germany
}
\numberwithin{equation}{section}
\newtheorem{theorem}{Theorem}[section]
\newtheorem{proposition}[theorem]{Proposition}
\newtheorem{lemma}[theorem]{Lemma}
\theoremstyle{definition}
\newtheorem{definition}[theorem]{Definition}
\theoremstyle{remark}
\newtheorem{remark}[theorem]{Remark}
\newtheorem{example}[theorem]{Example}
\newcommand{\CC}{\ensuremath{\mathds{C}}}
\newcommand{\RR}{\ensuremath{\mathds{R}}}
\newcommand{\del}{\ensuremath{\partial}}
\newcommand{\diff}[2]{\ensuremath{\frac{\del #1}{\del #2}}}
\setlist[enumerate,1]{label=(\roman*)}
\begin{document}

\begin{abstract}
	We reconsider non-degenerate second order superintegrable systems in dimension two as geometric structures on conformal surfaces.
	This extends a formalism developed by the authors, initially introduced for (pseudo-)Riemannian manifolds of dimension three and higher.
	The governing equations of non-degenerate second order superintegrability in dimension two are structurally significantly different from those valid in higher dimensions.
	Specifically, we find conformally covariant structural equations, allowing one to classify the (conformal classes of) non-degenerate second order superintegrable systems on conformal surfaces geometrically.
	We then specialise to second order properly superintegrable systems on surfaces with a (pseudo-)Rie\-man\-nian metric and obtain structural equations in accordance with the known equations for Euclidean space. We finally give a single explicit set of purely algebraic equations defining the variety parametrising such systems on all constant curvature surfaces.
\end{abstract}

\maketitle
\tableofcontents

\section{Introduction}
Second order (maximally) superintegrable systems are Hamiltonian systems that, informally, possess a high degree of symmetry.
On surfaces, such systems have been intensively studied and, in particular, a classification of these systems exists for the complex case, 
\cite{KKM05a,Kalnins&Kress&Pogosyan&Miller}, as a list of normal forms up to isometries.
In this sense, second-order superintegrable systems are completely classified in dimension~2, including both properly and conformally superintegrable ones and 
comprising non-degenerate as well as degenerate systems \cite{KKM05a,KKM05b,Kalnins&Kress&Pogosyan&Miller}. A collection of many of the relevant works can be found in \cite{KKM18}.

For second-order properly superintegrable systems on Euclidean 2-space, the classification space has been shown to be an algebraic variety \cite{Kalnins&Kress&Miller,Kress&Schoebel}. 
Any 2-dimensional second-order conformally superintegrable system is St\"ackel equivalent to one of these flat models, or to the so-called ``generic system'' on the 2-sphere~\cite[Theorem 3]{KKM05b}.
St\"ackel transformations\footnote{%
	In the literature, the name \emph{coupling constant metamorphism} is also common, although these transformations are not the same in general and originated from different contexts. However, the concepts coincide in the context of interest here, see \cite{Post2010} for instance.}
are a way of identifying second order superintegrable systems on certain conformally equivalent metrics. General conformal transformations, however, do not preserve proper superintegrability, but lead to the more general concept of conformally superintegrable systems. A classification of the conformal classes (also known as St\"ackel classes) of \emph{non-degenerate} second order conformally superintegrable systems has been obtained in~\cite{Kress07,KKM05b}, see also~\cite{Vollmer21}.
Non-degeneracy, loosely speaking, here means that a maximal linear family of potentials is compatible with the integrals of motion.

Conformal classes have only been classified for non-degenerate systems. Yet, for degenerate second order superintegrable systems on surfaces, it has been proven \cite{KKMP09,KKM05a} that they always arise as the restriction of non-degenerate systems, emphasising the significance of non-degenerate systems in the 2-dimensional case. 

Second-order superintegrable systems have interesting interrelations to other
fields, as they give rise to quadratic Poisson algebras
\cite{Post11,BDK93,DT07,DT08,CMZ19} and corresponding representations as
differential or difference operators and hence to special functions.
Quadratic algebras have been used to classify superintegrable systems up to
St\"ackel equivalence \cite{Kress07}.
All non-degenerate two-dimensional systems arise from a ``generic'' system on the $2$-sphere via B\^ocher contractions, which are closely linked to a generalisation of \.{I}n\"on\"u-Wigner Lie algebra contractions \cite{KMP13,RKMS2017}.
This is particularly intriguing in the light of a correspondence that has been
observed in \cite{KMP13} between the hierarchy of isometry classes of
non-degenerate superintegrable systems in dimension two on one hand and the
Askey-Wilson scheme of hypergeometric orthogonal polynomials and their
degenerations on the other \cite{Askey&Wilson}.  We expect that the present
work will provide the groundwork to reveal an algebraic variety underlying the
Askey-Wilson scheme, which would allow to study hypergeometric orthogonal
polynomials via algebraic geometric tools.

\subsection{Novelty of the present paper}
Recently developed methods~\cite{KSV2023,KSV2024} are revealing geometric structures underlying superintegrable systems in dimensions $n\geq3$.  In the light of these advances, we reassess second order systems in two dimensions, approaching the classification problem for superintegrable systems in terms of algebraic varieties endowed with a natural action of the isometry group.
Such an algebraic geometric classification exists for systems on flat surfaces, but is not known for any surface with curvature, not even for the 2-sphere, although this case is studied in \cite{Kalnins&Kress&Pogosyan&Miller}. 
Describing the classification space of second-order superintegrable systems as an algebraic variety is one goal of the present paper, providing the basis for a holistic understanding of superintegrable systems.

Our novel approach consistently adopts a conformal viewpoint.
The aim of the present paper is to reconsider conformal classes of (second-order maximally) conformally superintegrable systems as geometric structures on conformal surfaces and to offer a formulation of such systems on Riemann surfaces.
We thereby shed new light onto the subject, obtaining a tensorial formulation, and thus geometric understanding, of the objects and computations in \cite{Kress&Schoebel,Kalnins&Kress&Miller,KKM07b,KKM07c}.
In the spirit of \cite{KSV2024}, we study the integrability conditions for the compatible potentials and (conformal) Killing tensors of a (conformally) superintegrable system. Specifying the values of certain scalar functions at a point suffices to reconstruct the system locally. The reconstruction of a properly superintegrable system on a surface with a specific Riemannian metric, however, requires certain algebraic obstruction conditions to be satisfied.

\subsection{Structure of the paper}\label{sec:structure}

The paper is organised as follows: After a brief summary of the notation, we recall the link between Riemann surfaces and conformal surfaces in Section~\ref{sec:Riemann.surface.conformal} as well as the definition and basic properties of conformally superintegrable systems in Sections~\ref{sec:conformal.systems} and~\ref{sec:BD}. In Section~\ref{sec:2D.systems} we discuss the properties of non-degenerate second-order (maximally) conformally superintegrable systems, reconsidering the results from~\cite{KSV2023,KSV2024}. This discussion will be carried out in tensorial language.
We obtain the integrability conditions for the compatible potentials and conformal Killing tensors in a general form.
These integrability conditions are then recast, in Section~\ref{sec:local.conf.equations}, using local isothermal coordinates.
Working simultaneously on a tensorial and a coordinate level will turn out to be useful since in low dimension either formulation has its advantages and drawbacks.
Conformal transformations of non-degenerate superintegrable systems will briefly be recalled in Section~\ref{sec:conformal.trafos}.

Section~\ref{sec:riemann.surface} is dedicated to conformally superintegrable systems, which are studied using specific \emph{gauge choices}. By this we mean that we use the freedom to perform conformal rescalings in order to impose certain conditions that facilitate the computations. Namely, we use the so-called \emph{standard gauge}, in which the primary structure tensor is trace-free (see Section~\ref{sec:standard.conformal}), and the \emph{flat gauge}, in which the Riemannian manifold is flat (see Section~\ref{sec:flat.conformal}). The general (gauge-independent) formulas are then discussed in Section~\ref{sec:general.conformal}. In particular, an explicit PDE system is obtained for certain tensor fields -- called (conformal) superintegrable structure tensors -- that encode the system.

Section~\ref{sec:proper} is dedicated to properly superintegrable systems, which are briefly reviewed in Sections~\ref{sec:proper.preliminaries} and~\ref{sec:proper.non-degeneracy}. The integrability conditions for the potentials and (proper) Killing tensors of non-degenerate properly superintegrable systems are discussed in Sections~\ref{sec:proper.integrability}. In contrast to the conformal case, the initial data for the explicit PDE system for the (proper) superintegrable structure tensors are subject to algebraic conditions.
This is consistent with the special cases appearing in the literature, in particular the flat case. We detail the specific correspondences between our framework and the existing formulations in Section~\ref{sec:euclidean}.
The 2-sphere is briefly discussed in Section~\ref{sec:sphere}.

\subsection{Notation}\label{sec:notation}

As here is not the place for a detailed introduction to representation theory and Young tableaux, we refer the interested reader to the literature, solely citing some conventions to be used in what follows.
Most importantly, we are going to denote tensor symmetries by Young tableaux. For instance,
\[
	{\young(ij)}_\circ T_{ijk} = T_{ijk}+T_{jik}-\frac2n g_{ij} T\indices{^a_{ak}}
\]
is a projection of the tensor $T_{ijk}$ which is symmetric and tracefree in $(i,j)$.
More complicated tableaux denote a complete antisymmetrisations in the indices of each column, followed by a complete symmetrisation in the indices of each row. For example
$$ {\young(ji,k)}=\young(ij)\young(j,k). $$
The respective dual operator, where symmetrisations are performed before antisymmetrisations, is indicated by an asterisk.
For instance, 
$$ {\young(ji,k)}^*=\young(j,k)\young(ij). $$
Consequently, the projection of a tensor $T_{ijkl}$ onto its component with algebraic curvature symmetry reads
\[
	{\young(ij,kl)}^* T_{ijkl} = \young(i,k)\young(j,l)\young(ij)\young(kl) 
	T_{ijkl}\,.
\]
Concerning differentiation, we indicate usual (i.e., partial) derivatives by a semi-colon. In local coordinates $(x,y)$, for instance, we will have
\[
	\nabla V = V_{;x}\partial_x + V_{;y}\partial_y.
\]
Covariant derivatives, on the other hand, shall be denoted by a comma.

\subsection*{Acknowledgements.}
We would like to thank the contributors of the computer algebra systems \texttt{cadabra2} \cite{Peeters06,Peeters07} and \texttt{SageMath} \cite{sagemath}, which have been used to find, prove and simplify some of the most important results in this work, for providing, maintaining and extending their software and distributing it under a free license.

This work was funded by the German Research Foundation (DFG), project 540196982, and initiated with funding by DFG under the project 353063958. AV also acknowledges support from the project PRIN 2017 ``Real and Complex Manifolds: Topology, Geometry and holomorphic dynamics'' and from the Forschungsfonds of the University of Hamburg.

\section{Preliminaries}
\label{sec:preliminaries}

The goal for the first half of the paper is to reconsider second order conformally superintegrable systems in dimension~2 from the viewpoint of Riemann surfaces. This is possible since the complex unit provides a concept of orthogonality, establishing a 1-to-1 correspondence between conformal classes of Riemannian metrics and Riemann surfaces.

\subsection{Riemann surfaces as conformal surfaces}\label{sec:Riemann.surface.conformal}

We briefly introduce the correspondence between Riemann surfaces and conformal classes of Riemannian manifolds.
Two Riemannian metrics $g_1,g_2$ are said to be \emph{conformally equivalent}, $g_2\sim g_1$, if there is a real analytic function $\Omega:S\to\RR$ such that $g_2=\Omega^2g_1$.
A conformal structure then is an equivalence class of Riemannian metrics under this equivalence.
It has been shown in \cite{KSV2024} how conformally superintegrable systems can be identified under conformal rescalings.

A Riemann surface $S\subset\CC$ can, equivalently, be thought of as an oriented manifold of real dimension $n=2$
endowed with a conformal structure. 
Indeed, locally consider a patch $S\subset\CC$ with the standard complex structure $i$. The class of compatible metrics on $S$ associated with $i$ is then given by $[g]=\{\phi(z,\bar z)^2\,dzd\bar z:\phi\ne0\}$. A conformal class of Riemannian metrics, on the other hand, induces a complex structure on the surface $S$ via the existence of Gau{\ss}' isothermal coordinates and orthogonality \cite[Ch.~3.11]{Jost2006}.
Here, we aim to reformulate the formalism developed in~\cite{KSV2024} such that a concrete (pseudo-)Riemannian metric needs not be specified. The aforementioned correspondence hence allows us to use the complex structure $J=i$ instead of the conformal class $[g]$.
One of the advantages of such a formulation is that we are going to be able to describe the superintegrable system concisely using two complex functions, which satisfy certain integrability conditions.

Locally, we may consider $S\subset\CC$ with the standard complex structure $J=i$.
Choosing a specific Riemannian metric $g=\phi^2dzd\bar z$ from the class $[g]$ fixes the scalar function $\phi$ where we use Gau{\ss}' isothermal coordinates, such that $g$ is brought into the form ($z=x+iy$, $w=x-iy$)
\begin{equation}\label{eqn:local.coordinates}
	g = \phi^2(x,y)(dx^2+dy^2) = \phi^2(z,w)\,dzdw\,,
\end{equation}
with a real-valued function $\phi=\phi(x,y)=\phi(z,w)$ where $w=\bar z$.
By abuse of notation we use the symbol $\phi$ in both charts.
Although we focus on Riemannian metrics, we will sometimes comment on surfaces with a pseudo-Riemannian metric,
\begin{equation}\label{eqn:local.coordinates.pseudo}
	g = \phi^2(x,y)(dx^2-dy^2) = \phi^2(z,w)\,dzdw\,,
\end{equation}
where $z=x+y$ and $w=x-y$ are real coordinates.
A reader only interested in Riemannian metrics may therefore simply identify $w=\bar z$ in the following.
In Section~\ref{sec:riemann.surface} we focus on systems on Riemann surfaces and therefore set $w=\bar z$ explicitly.
In the literature, e.g.~\cite{Kalnins&Kress&Miller,KKM07b,KKM07c}, $z$ and $w$ are often taken to be complex coordinates,
\begin{equation}
	g = \phi^2(z,w)\,dzdw\,,
\end{equation}
where also $\phi$ is allowed to be complex valued. Our notation of $\bar z$ as $w$ may help the reader interested  in pursuing this viewpoint to generalise the equations presented here to complex coordinates.

\subsection{Conformally superintegrable systems}\label{sec:conformal.systems}
We introduce second-order (maximally) conformally superintegrable systems, limiting ourselves to the case of dimension $n=2$ that is of interest here.
Hence, let $M$ be a smooth manifold of dimension $n=2$ with metric $g$. The cotangent space $T^*M$ can, from a physical viewpoint, be interpreted as the \emph{phase space}: Darboux coordinates $(\mbf q,\mbf p)$ are thus called \emph{positions} $\mbf q=(q^1,q^2)$ and \emph{momenta} $\mbf p=(p_1,p_2)$. Note that a diffeomorphism $\varphi$ of $M$ induces a (fibre-preserving) symplectomorphism on $T^*M$ and that any symplectomorphism of $T^*M$ that preserves the tautological 1-form is fibre-preserving.

A \emph{Hamiltonian system} is a dynamical system characterised by a Hamiltonian $H(\mathbf p,\mathbf q)$, i.e.~a function on phase space. Its temporal evolution is governed by the equations of motion:
\begin{align*}
	\dot{\mathbf p}&=-\frac{\partial H}{\partial\mathbf q}&
	\dot{\mathbf q}&=+\frac{\partial H}{\partial\mathbf p}
\end{align*}
A function $F(\mathbf p,\mathbf q)$ on the phase space is called a
\emph{constant of motion} or \emph{first integral}, if it is constant under
this evolution, i.e.\ if
\[
	\dot F
	=\frac{\partial F}{\partial\mathbf q}\dot{\mathbf q}
	+\frac{\partial F}{\partial\mathbf p}\dot{\mathbf p}
	=\frac{\partial F}{\partial\mathbf q}\frac{\partial H}{\partial\mathbf p}
	-\frac{\partial F}{\partial\mathbf p}\frac{\partial H}{\partial\mathbf q}
	=0
\]
or
\[
	\{F,H\}=0,
\]
where
\[
	\{F,G\}=
	\sum_{i=1}^n
	\left(
		\frac{\partial F}{\partial q^i}
		\frac{\partial G}{\partial p_i}
		-
		\frac{\partial G}{\partial q^i}
		\frac{\partial F}{\partial p_i}
	\right)
	=
	\frac{\partial F}{\partial q^1}
	\frac{\partial G}{\partial p_1}
	+\frac{\partial F}{\partial q^2}
	\frac{\partial G}{\partial p_2}
	-\frac{\partial G}{\partial q^1}
	\frac{\partial F}{\partial p_1}
	-\frac{\partial G}{\partial q^2}
	\frac{\partial F}{\partial p_2}
\]
is the canonical Poisson bracket.  Such a constant of motion restricts the
trajectory of the system to a hypersurface in phase space.  If the system
possesses the maximal number of $2n-1=3$ functionally independent constants of
motion $F^{(\alpha)}$, i.e.\ $F^{(1)}$, $F^{(2)}$ and $F^{(0)}=H$, then its trajectory in phase space is the
(unparametrised) curve given as the intersection of the hypersurfaces $F^{(\alpha)}(\mathbf p,\mathbf q)=c^{(\alpha)}$, where the constants $c^{(\alpha)}$ are determined by the initial conditions.
For such systems we can solve the equations of motion exactly and in a purely algebraic way, without having to solve explicitly any differential equation.
The metric $g$ on $M$ gives rise to a natural Hamiltonian $H:T^*M\to\RR$,
\begin{equation}\label{eq:Hamiltonian}
	H(\mbf q,\mbf p)=g^{-1}_{\mbf q}(\mbf p,\mbf p)+V(\mbf q)\,,
\end{equation}
where we will usually suppress $\mbf q$ in the notation, if there is no risk of confusion.

\begin{definition}\label{def:main.notions}
	~
	\begin{enumerate}
		\item
		By a \emph{conformally (maximally) superintegrable system} in dimension two, we mean a 
		Hamiltonian system admitting $2n-1=3$ functionally independent conformal integrals of 
		the motion $F^{(\alpha)}$,
		\begin{align}
			\label{eq:conformal.integral}
			\{F^{(\alpha)},H\}&= \rho^{(\alpha)}\,H &
			\alpha&\in\{0,1,2\}\,,
		\end{align}
		with functions $\rho^{(\alpha)}(\mathbf{p},\mathbf{q})$ polynomial in momenta.
		The Hamiltonian is required to be among the conformal integrals, and by convention
		\[
			F^{(0)} = H\,,\quad \rho^{(0)}=0.
		\]
		\item
		A conformal integral of the motion is \emph{second order} if it 
		is of the form
		\begin{equation}
			\label{eq:quadratic.conformal}
			F^{(\alpha)}=C^{(\alpha)}+V^{(\alpha)},
		\end{equation}
		where
		\[
		C^{(\alpha)}(\mathbf p,\mathbf 
		q)=\sum_{i=1}^nC^{(\alpha)ij}(\mathbf q)p_ip_j
		\]
		is quadratic in momenta and $V^{(\alpha)}=V^{(\alpha)}(\mathbf q)$
		a function depending only on positions.
		A conformally superintegrable system is \emph{second order} if its conformal integrals $F^{(\alpha)}$ are second order and if~\eqref{eq:Hamiltonian} is given by the Riemannian metric 
		$g_{ij}(\mathbf q)$ on the underlying manifold.
		\item
		We call $V$ a \emph{conformal superintegrable potential} if the 
		Hamiltonian~\eqref{eq:Hamiltonian} gives rise to a conformally superintegrable 
		system.
	\end{enumerate}
\end{definition}
\noindent In this article we are concerned exclusively with \emph{second order maximally} superintegrable systems and thus we typically omit the terms ``second order'' and ``maximally'' without further mentioning.

\begin{remark}
	Instead of the characterisation given in the formal definition, a conformally superintegrable system may be specified by the following, equivalent data: the underlying manifold $(M,g)$, a potential $V\in\mathcal C^\infty(M)$, and the space $\mathcal F$ of conformal integrals compatible with the potential $V$. In the following we allow a linear family of potentials, and hence a conformally superintegrable system may be viewed as a quadruple $(M,g,\mathcal V,\mathcal F)$ satisfying the conditions outlined earlier in this section for any choice of $V\in\mathcal V$.
\end{remark}

\subsection{Bertrand-Darboux condition}\label{sec:BD}

Now consider the condition \eqref{eq:conformal.integral} for \eqref{eq:quadratic.conformal} and \eqref{eq:Hamiltonian}. According to the degree in momenta, it splits into a cubic and a linear part (with respect to~$\mathbf p$):
\begin{subequations}
	\label{eq:1st+3rd}
	\begin{align}
		\label{eq:3rd}\{C^{(\alpha)},G\}&=\rho^{(\alpha)}G\\
		\label{eq:1st}\{C^{(\alpha)},V\}+\{V^{(\alpha)},G\}&=\rho^{(\alpha)} V
	\end{align}
	where $G(\mathbf q,\mathbf p)=g^{-1}_\mathbf{q}(\mathbf p,\mathbf p)$.
\end{subequations}
The condition~\eqref{eq:3rd} is equivalent to the requirement that $C_{ij}^{(\alpha)}$ are the components of a conformal Killing tensor in the following sense.
\begin{definition}
	A (second order) \emph{conformal Killing tensor} is a symmetric tensor 
	field on a pseudo-Riemannian manifold satisfying the Killing equation
	\begin{equation}
		\label{eq:conformal.Killing}
		{\young(ijk)}_\circ C_{ij,k} = 0\,.
	\end{equation}
\end{definition}
\noindent 
In particular, the metric $g$ itself is trivially a conformal Killing tensor.
Note that for a second-order conformal integral, \eqref{eq:conformal.integral} is a cubic polynomial in momenta and therefore $\rho^{(\alpha)}$ has to be linear in $\mathbf{p}$, i.e.~$\rho^{(\alpha)}=\rho_ap_bg^{ab}$.
The coefficients $\rho_a$ are components of the 1-form $\rho^{(\alpha)}$.
A direct computation confirms that
$$ \rho_k = \frac{2}{n+2}C\indices{^a_{k,a}} = \frac12C\indices{^a_{k,a}}\,, $$
where we drop the superscript $(\alpha)$ for brevity.
Note that given a conformal Killing tensor $C_{ij}$ and a smooth function $\lambda$ on $M$, $C_{ij}+\lambda g_{ij}$ also is a conformal Killing tensor of $g$.
In fact, observe that with $F=C^{ij}p_ip_j+W$ also $F-\lambda H$ is a conformal integral for $H$, for any function $\lambda$. Choosing $\lambda=C\indices{^a_a}$, we conclude that in Definition~\ref{def:main.notions} we may choose the conformal integrals $F^{(\alpha)}$ for $\alpha\ne0$ such that their quadratic part arises from a tracefree conformal Killing tensor.
\begin{remark}
	Without loss of generality, from now on we will tacitly suppose that the quadratic part of a conformal integral $F^{(\alpha)}$, for $\alpha\ne0$, is associated with a \emph{tracefree} conformal Killing tensor.
	We denote the space of tracefree conformal Killing tensors associated with $\mathcal F$ by $\mathring{\mathcal{C}}$.
\end{remark}

We continue our discussion of the equations~\eqref{eq:1st+3rd}.
Note that the metric $g$ allows us to identify symmetric forms and endomorphisms. Interpreting a conformal Killing tensor in this way as an endomorphism on 1-forms, equation \eqref{eq:1st} can be written in the form
\[
	dV^{(\alpha)}=C^{(\alpha)}dV-\rho V,
\]
and shows that, once the conformal Killing tensors $C^{(\alpha)}$ are known, the potentials $V^{(\alpha)}$ can be recovered from $V=V^{(0)}$ up to an irrelevant constant, provided the integrability condition
\begin{equation}\label{eq:dCdV}
	dC^{(\alpha)}dV - Vd\rho^{(\alpha)} - dV\wedge\rho^{(\alpha)}
	= d(C^{(\alpha)}dV-\rho^{(\alpha)} V) = 0
\end{equation}
holds, where $\wedge$ denotes the wedge operator in the exterior algebra of $T^*M$.
Equation~\eqref{eq:dCdV} is called the (conformal) \emph{Bertrand-Darboux condition}. Note that the potentials $V^{(\alpha)}$ for $\alpha\not=0$ are now eliminated from our equations.
As we will see in the following, the remaining potential $V=V^{(0)}$ can be eliminated as well, leaving equations on the conformal Killing tensors $C^{(\alpha)}$ alone.
In components, the Bertrand-Darboux condition \eqref{eq:dCdV} for a conformal Killing tensor $C=C^{(\alpha)}$ in a superintegrable system then reads, dropping the superscript $(\alpha)$,
\begin{equation}
	\label{eq:dCdV:ij}
	\young(i,j)
	\bigl(
		C\indices{^a_i}V_{,ja}+C\indices{^a_{i,j}}V_{,a}
		-V_{,i}\rho_{j}-V\rho_{i,j}
	\bigr)=0\,,
	\qquad\text{where}\quad \rho_k=\frac12\,C\indices{^a_{k,a}}.
\end{equation}

\subsection{Non-degenerate systems}\label{sec:2D.systems}
We now aim at a preliminary version of the integrability conditions for compatible potentials and (trace-free) conformal Killing tensors of non-degenerate systems. We obtain such conditions by reviewing~\cite{KSV2023,KSV2024}, which we adapt to the 2-dimensional case.
A second-order conformally superintegrable system is said to be \emph{irreducible} if the space $\mathring{\mathcal{C}}$ of tracefree conformal Killing tensors associated to it via the integrals of motion forms an irreducible set of endomorphisms.
Now consider~\eqref{eq:dCdV}. The following is shown in \cite{KSV2023,KSV2024}, see also \cite{KKM05a}: for an irreducible system, one can solve~\eqref{eq:dCdV} for all second derivatives of the potential $V$, except for the Laplacian $\Delta V$, and the solution can be written in terms of the tracefree conformal Killing tensors and their derivatives. One therefore obtains an expression of the form
\begin{equation}\label{eq:conformal.Wilczynski}
	V_{,ij} = T\indices{_{ij}^k}V_{,k} + \frac12\,g_{ij}\Delta V + \bigtau_{ij}V
\end{equation}
where the tensors $T_{ijk}$ and $\bigtau_{ij}$ only depend on the space spanned by the trace-free conformal Killing tensors $C^{(\alpha)}$.
In the present paper, our main interest is in non-degenerate systems.
\begin{definition}\label{def:non-degenerate.conformal}
	Consider the quadruple $(M,g,\mathcal V,\mathcal F)$ composed of a smooth manifold $M$ of dimension $n=2$ with metric $g$, a space $\mathcal V$ of scalar functions and a space of conformal integrals $\mathcal F$.
	We say that $(M,g,\mathcal V,\mathcal F)$ is \emph{non-degenerate} if
	\begin{itemize}
		\item the space $\mathcal F$ has dimension $2n-1=3$,
		\item the space $\mathcal V$ has dimension $n+2=4$, and
		\item for any $V\in\mathcal V$, and any $F\in\mathcal F$, Equations~\eqref{eq:conformal.Wilczynski} and~\eqref{eq:dCdV} are satisfied.
	\end{itemize}
	An irreducible second-order conformally superintegrable system is called \emph{non-degenerate} if such a non-degenerate quadruple exists, where~$\mathcal V$ and~$\mathcal F$ include, respectively, its potential and associated conformal integrals.
\end{definition}

\begin{remark}
	Note that this definition is consistent with the definition of non-degenerate second-order superintegrable systems on manifolds of dimension $n\geq3$, see \cite{KSV2023,KSV2024}.
	These references also introduce so-called \emph{abundant} systems:
	non-degenerate systems are called abundant, if their $(n+2)$-dimensional space of potentials is compatible, via~\eqref{eq:dCdV}, with a space of tracefree conformal Killing tensors of dimension \smash{$\frac{n(n+1)}{2}-1=\frac12(n-1)(n+2)$} \cite{KSV2023,KSV2024}.
	Note that this condition is always satisfied in dimension~2, as $2n-2=2=\frac12(n-1)(n+2)$, and hence the concepts of non-degeneracy and abundantness coincide.
\end{remark}

Because of the symmetries of $V_{,ij}$, we infer that the structure tensors of a non-degenerate system satisfy
\[
	\young(i,j)\,T\indices{_{ij}^k}=0\,,
	\qquad
	\young(i,j)\bigtau_{ij} = 0\,,
	\qquad
	T\indices{_a^{ak}} = 0\,,
	\qquad
	\bigtau\indices{^a_a} = 0\,.
\]
and that, moreover, the structure tensors are unique for such systems, c.f.~\cite{KSV2023,KSV2024}.
The following lemma is a special case of Proposition 4.1 in~\cite{KSV2024}.
\begin{lemma}\label{la:prolongation:V}
	A non-degenerate conformally superintegrable potential in dimension~$2$, 
	satisfying~\eqref{eq:conformal.Wilczynski}, forms the closed system
	\begin{subequations}\label{eqn:prolongation:V}
		\begin{alignat}{9}
			\label{eqn:prolongation:V:1}
			V_{,ij}
			&=T\indices{_{ij}^m}&&V_{,m}
			+&\bigtau_{ij}&V
			+&\tfrac{1}{2}g_{ij}&\Delta V \\
			\label{eqn:prolongation:V:2}
			(\Delta V)_{,k}
			&=2q\indices{_k^m}&&V_{,m}
			+&2\gamma_k&V
			+&t_k&\Delta V\,,
		\end{alignat}
	\end{subequations}
	where
	\[
	t_j = \frac12\,T\indices{_{aj}^a}\,,
	\qquad
	q\indices{_j^m} = Q\indices{_{ij}^{im}}\,,
	\qquad
	\gamma_k = \Gamma\indices{_{ak}^a}
	\]
	are derived from
	\begin{align}
		\label{eqn:Q}
		Q\indices{_{ijk}^m}
		&=
		T\indices{_{ij}^m_{,k}}
		+T\indices{_{ij}^l}T\indices{_{lk}^m}
		-R\indices{_{ijk}^m} + \bigtau_{ij}g_k^m
		\\
		\label{eq:Gamma}
		\Gamma_{ijk}
		&=
		\bigtau_{ij,k}+T\indices{_{ij}^a}\bigtau_{ak}
		\\
		\label{eq:Riemann}
		R_{ijkl}
		&=
		\frac{R}{2}\,( g_{ik}g_{jl}-g_{il}g_{jk} )
	\end{align}
	and where $R$ denotes the Gau{\ss} curvature.
\end{lemma}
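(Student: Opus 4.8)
The plan is to prolong the second-order equation \eqref{eqn:prolongation:V:1} to a closed first-order system by adjoining $\Delta V$ as an independent dependent variable alongside $V$ and $V_{,k}$, and then to produce the missing evolution equation \eqref{eqn:prolongation:V:2} for $(\Delta V)_{,k}$. Concretely, I would differentiate \eqref{eqn:prolongation:V:1} covariantly with $\nabla_k$ and substitute \eqref{eqn:prolongation:V:1} itself for every Hessian $V_{,mk}$ that reappears. This gives a fully explicit formula for the third covariant derivative,
\[
	V_{,ijk} = A_{ijk} + \tfrac12\,g_{ij}(\Delta V)_{,k},
\]
where $A_{ijk}$ is symmetric in $(i,j)$ and collects the terms $T\indices{_{ij}^m_{,k}}V_{,m}$, $T\indices{_{ij}^m}T\indices{_{mk}^l}V_{,l}$, $\tfrac12 T\indices{_{ij}^m}g_{mk}\Delta V$, $T\indices{_{ij}^m}\bigtau_{mk}V$, $\bigtau_{ij,k}V$ and $\bigtau_{ij}V_{,k}$. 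By construction this expression already carries the correct symmetry in the first two indices, so the content of the lemma is reduced to extracting $(\Delta V)_{,k}$ and showing it closes back onto the jet variables.

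The key step is that one cannot obtain $(\Delta V)_{,k}$ by a naive trace. Contracting the displayed formula over $(i,j)$ merely reproduces the identity $(\Delta V)_{,k}=g^{ij}V_{,ijk}$, i.e.\ the tautology $g^{ij}A_{ijk}=0$, which holds automatically thanks to the trace-free conditions $T\indices{_a^{ak}}=0$ and $\bigtau\indices{^a_a}=0$ and carries no information. The genuine content therefore sits in the Ricci identity
\[
	V_{,ijk}-V_{,ikj}=R\indices{_{kji}^l}V_{,l},
\]
into which I substitute the explicit third-derivative formula for both orderings. Using that in dimension two the Riemann tensor is given by \eqref{eq:Riemann} and contracting with $g^{ij}$, the leading terms combine to the coefficient $\tfrac{n-1}{n}(\Delta V)_{,k}=\tfrac12(\Delta V)_{,k}$, which is nonzero and can hence be solved for $(\Delta V)_{,k}$.

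It then remains to recognise the right-hand side as \eqref{eqn:prolongation:V:2}. Here the trace-free conditions again annihilate all unwanted contractions, so that the coefficients of $V_{,m}$, $V$ and $\Delta V$ reduce to the divergences and traces defining the auxiliary tensors in \eqref{eqn:Q} and \eqref{eq:Gamma}: the curvature term produced by the Ricci identity is precisely the $-R\indices{_{ijk}^m}$ appearing in $Q\indices{_{ijk}^m}$, while the $T$- and $\bigtau$-derivative terms assemble into the remaining parts of $Q$ and into $\Gamma_{ijk}$. Performing the contractions $q\indices{_k^m}=Q\indices{_{ik}^{im}}$, $\gamma_k=\Gamma\indices{_{ak}^a}$ and $t_k=\tfrac12 T\indices{_{ak}^a}$ then yields \eqref{eqn:prolongation:V:2}, and since the derivatives of $V$, $V_{,m}$ and $\Delta V$ are now all expressed through $V$, $V_{,m}$ and $\Delta V$ alone, the system \eqref{eqn:prolongation:V} is closed.

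The main obstacle I anticipate is exactly this tautology of the trace: one is forced to pass through the Ricci identity and the two-dimensional curvature identity \eqref{eq:Riemann}, and the delicate part is the index bookkeeping showing that every term not proportional to $g_{ij}(\Delta V)_{,k}$ reorganises into the contractions of $Q$, $\Gamma$ and $T$, with the curvature contribution matching the sign convention built into \eqref{eqn:Q}. As an independent cross-check one may instead specialise the general prolongation of Proposition~4.1 in \cite{KSV2024} to $n=2$, since the statement is declared to be a special case thereof.
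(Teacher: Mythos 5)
Your proposal is correct and follows essentially the same route as the paper's (one-line) proof: differentiate \eqref{eqn:prolongation:V:1} covariantly, contract over the differentiation index and a Hessian index — which, as you rightly note, forces a pass through the Ricci identity and the two-dimensional curvature formula \eqref{eq:Riemann} to produce the solvable coefficient $\tfrac{n-1}{n}(\Delta V)_{,k}=\tfrac12(\Delta V)_{,k}$ — and then resubstitute \eqref{eqn:prolongation:V:1} so the remaining terms assemble into the contractions of $Q$, $\Gamma$ and $T$. Your observation that the naive $g^{ij}$-trace is a tautology is accurate and correctly identifies why the contraction must be taken against the derivative index.
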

\begin{proof}
	Take a covariant derivative of~\eqref{eq:conformal.Wilczynski} and 
	contract the resulting equation. Then solve this equation for $(\Delta V)_{,k}$.
\end{proof}

\begin{remark}
	A system of PDEs of the form~\eqref{eqn:prolongation:V}, in which derivatives of a certain order $m$ of the unknowns are expressed in terms of lower order derivatives of these unknowns, is called a \emph{prolongation system}. If all the derivatives of order $m$ can be expressed in such a way, the system is called a \emph{prolongation system of finite type} or a \emph{closed system}. Since in such a system the number of equations exceeds the number of unknown functions, it is \emph{overdetermined}.
	
	The prolongation system~\eqref{eqn:prolongation:V} is indeed an overdetermined prolongation system of finite type.
	As such, this system of PDEs can be integrated if the \emph{integrability conditions} are satisfied. This means that the Ricci identities are satisfied for each of the equations, i.e.~for~\eqref{eqn:prolongation:V:1} and~\eqref{eqn:prolongation:V:2}.
\end{remark}

\begin{remark}
	To avoid confusion, we would like to point out that the definition of $t_i$ in Lemma~\ref{la:prolongation:V} differs by a factor of $\nicefrac12$ from that in \cite{KSV2024}, where the 1-forms $t_i$ and $\bar t_i$ are used that only differ by a constant, dimension-dependent factor. The convention here is chosen as to make the resulting expressions as concise as possible.
\end{remark}

The following proposition follows from results in Section~5 of~\cite{KSV2024}.
\begin{proposition}\label{eq:conformal.K.eqns} 
	A non-degenerate second order conformally superintegrable system in 
	dimension~2 satisfies
	\begin{align}
		\label{eqn:prolongation.C}
		\nabla_kC_{ij} &= P\indices{_{ijk}^{mn}}C_{mn}\,,
		\\
		\intertext{and}
		\label{eqn:raw.tau}	
		\bigtau_{ij}
		&= \Lambda\indices{_{ij}^a_{,a}}
		-\Lambda\indices{^a_{ja,i}}
		+\Lambda_{iab}P\indices{^{abc}_{cj}}
		-\Lambda^{cab}P_{abijc}
	\end{align}
	where
	\begin{align*}
		P_{ijkmn} &= \frac16\,\young(mn)\,\left(
		\young(ji,k)\,T_{mji}g_{kn}
		+ g_{ij}\,(T_{mkn}-2t_mg_{kn})\,.
		\right)
		\\
		\intertext{and}
		\Lambda_{kab} &= \frac16\,\young(ab)\,(T_{kab}-2t_{a}g_{bk})\,.
	\end{align*}
\end{proposition}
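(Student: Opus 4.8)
The plan is to eliminate the potential completely from the Bertrand--Darboux condition \eqref{eq:dCdV:ij}, turning it into purely algebraic relations among $C$, its covariant derivatives, and the structure tensors $T$ and $\bigtau$. First I would substitute the prolongation \eqref{eq:conformal.Wilczynski} for the second derivative $V_{,ja}$ into \eqref{eq:dCdV:ij}. After this substitution the bracket in \eqref{eq:dCdV:ij} becomes linear in the four quantities $V$, $V_{,1}$, $V_{,2}$ and $\Delta V$; by non-degeneracy (Definition~\ref{def:non-degenerate.conformal}) these span a four-dimensional space and can be prescribed independently at a point, so each of their coefficients must vanish separately.

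Separating coefficients produces three tensorial identities. The coefficient of $\Delta V$ is proportional to $\young(i,j)C_{ij}$ and hence vanishes identically by the $(i,j)$-antisymmetry of \eqref{eq:dCdV:ij} --- this is precisely why $\Delta V$ cannot be solved for. The coefficient of $V_{,m}$ gives $\young(i,j)\bigl(C\indices{^a_i}T\indices{_{ja}^m}+C\indices{^m_{i,j}}-\delta^m_i\rho_j\bigr)=0$, and the coefficient of $V$ gives $\young(i,j)\bigl(C\indices{^a_i}\bigtau_{ja}-\rho_{i,j}\bigr)=0$.

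To reach \eqref{eqn:prolongation.C} I would decompose $\nabla_k C_{ij}$, which is symmetric and trace-free in $(i,j)$, into its irreducible pieces: the totally symmetric trace-free part vanishes by the conformal Killing equation \eqref{eq:conformal.Killing}, the mixed-symmetry (hook) part is read off from the $V_{,m}$-identity, and the trace part is carried by $\rho$. The one self-referential point is this trace: $\rho_k=\tfrac12 C\indices{^a_{k,a}}$ is itself a contraction of $\nabla C$, so I would contract the $V_{,m}$-identity to solve for it. In $n=2$ one has $T\indices{_{ja}^j}=2t_a$, and the contraction yields $\rho_i=\tfrac13\,C^{ab}(T_{iab}-2t_a g_{ib})=\Lambda\indices{_i^{ab}}C_{ab}$, which simultaneously identifies $\Lambda$ and feeds the trace contribution $g_{ij}\rho_k$ back in. Reassembling the symmetric, hook and trace pieces with the Young projectors then gives \eqref{eqn:prolongation.C} with the stated $P$.

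Finally, for \eqref{eqn:raw.tau} I would return to the $V$-identity $\young(i,j)\bigl(C\indices{^a_i}\bigtau_{ja}-\rho_{i,j}\bigr)=0$. Substituting $\rho_i=\Lambda\indices{_i^{ab}}C_{ab}$ and differentiating by means of the prolongation \eqref{eqn:prolongation.C} turns $\rho_{i,j}$ into an expression algebraic in $C$, with coefficients assembled from $\nabla\Lambda$ and $\Lambda P$. Since a non-degenerate system in dimension two is abundant, $\mathring{\mathcal{C}}$ exhausts the trace-free symmetric $2$-tensors, so $C_{mn}$ may be stripped off by matching coefficients; contracting the resulting identity to isolate the trace-free symmetric tensor $\bigtau_{ij}$ produces the divergence terms $\Lambda\indices{_{ij}^a_{,a}}$, $\Lambda\indices{^a_{ja,i}}$ and the algebraic terms of \eqref{eqn:raw.tau}. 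I expect the main obstacle to be combinatorial rather than conceptual: tracking the $\mathrm{O}(2)$-irreducible decomposition, checking the self-consistency of the trace $\rho$, and matching the raw output against the explicitly Young-symmetrised forms of $P$ and $\Lambda$, including the $n=2$-specific simplifications --- the kind of bookkeeping for which computer algebra is indispensable. Conceptually this is just the specialisation to $n=2$ of the general construction in Section~5 of \cite{KSV2024}.
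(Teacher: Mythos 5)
Your plan is correct and follows essentially the same route as the paper, which gives no proof of its own but simply defers to Section~5 of \cite{KSV2024}: substituting the Wilczynski expression \eqref{eq:conformal.Wilczynski} into the Bertrand--Darboux condition \eqref{eq:dCdV:ij}, using non-degeneracy to split off the coefficients of $V$, $V_{,m}$ and $\Delta V$, recovering $\rho_i=\Lambda\indices{_i^{ab}}C_{ab}$ by contraction, and then stripping $C_{mn}$ from the $V$-coefficient identity via abundance to isolate $\bigtau_{ij}$. The remaining work is, as you say, the Young-projector bookkeeping already carried out in the cited reference.
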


Equation~\eqref{eqn:raw.tau} permits us to remove $\bigtau_{ij}$ from the equations. We are therefore left with~\eqref{eqn:prolongation:V} and~\eqref{eqn:prolongation.C}. They form overdetermined systems of partial differential equations for $V$ and $C$, respectively.
Forming a closed system, the systems can be integrated for given initial values of $V$, $V_{k}$ and, respectively, $C_{ij}$, at a point, provided that the integrability conditions are satisfied, i.e.~if the respective Ricci identities hold.
The integrability conditions can be easily inferred using results from~\cite{KSV2024}, particularly Proposition 4.1, Theorem 5.1 and Lemma 5.4 therein.
\begin{proposition}\label{prop:SICC}
	In dimension $n=2$, the Ricci identities for~\eqref{eqn:prolongation.C} and~\eqref{eqn:prolongation:V} are equivalent to the decomposition
	\begin{subequations}
		\label{eq:SICC}
		\begin{equation}\label{eq:SICC:T}
			T_{ijk} = S_{ijk}+{\young(ij)}_\circ t_ig_{jk}\,,		
		\end{equation}
		where $S$ is a totally symmetric and trace-free $(0,3)$-tensor and where the $t_i$ are the components of the differential of a function $t$, together with the following conditions:
		\begin{align}
			\label{eq:SICC:Delta.t}
			\Delta t &= \frac13\,S^{abc}S_{abc}+\frac32R\,,
			\\
			\label{eq:SICC:DS.sym}
			{\young(ijkl)}_\circ\nabla_lS_{ijk}
			&=  \frac43\,{\young(ijkl)}_\circ S_{ijk}t_l
			\\
			\label{eq:SICC:tau}
			\bigtau_{ij} &= \left(
				\frac23\,\nabla^2_{ij}t
				- \frac49\,S_{ija}t^a
				- \frac89\,t_it_j
				+ \frac13\,\nabla^aS_{ija}
			\right)_\circ\,,
			\\
			\label{eq:SICC:q}
			0 &= \young(k,l) \bigl(
				q\indices{_k^n_{,l}}
				+T\indices{_{ml}^n}q\indices{_k^m}
				+2t_kq\indices{_l^n}
				+g_k^n\gamma_l
			\bigr)\,,
			\\
			\label{eq:SICC:gamma}
			0 &= \young(i,j)\Bigl(
			\gamma_{i,j}+q\indices{_i^m}\bigtau_{mj}+t_i\gamma_j
			\Bigr)\,.
		\end{align}
	\end{subequations}
\end{proposition}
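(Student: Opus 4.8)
The plan is to compute the two relevant Ricci identities explicitly and show that they are equivalent to the stated decomposition together with conditions \eqref{eq:SICC:Delta.t}--\eqref{eq:SICC:gamma}. The integrability of a prolongation system of finite type is governed precisely by the Ricci identities for each of its equations, so I would first write these out for \eqref{eqn:prolongation.C} and for \eqref{eqn:prolongation:V} separately. Since $n=2$, a decisive simplification is available: the Weyl tensor vanishes and the full Riemann tensor is determined by the scalar (Gauß) curvature via \eqref{eq:Riemann}, so every curvature term that appears can be expressed through $R$ alone. I would exploit this throughout to collapse the generally complicated higher-dimensional integrability conditions of \cite{KSV2024} into the much leaner two-dimensional form claimed here.

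First I would handle the Killing-tensor equation \eqref{eqn:prolongation.C}. Applying $\nabla_{[l}\nabla_{k]}$ to $C_{ij}$ and inserting \eqref{eqn:prolongation.C} twice, the left-hand side produces the curvature acting on the two-index tensor $C_{ij}$, while the right-hand side produces $\nabla_{[l}P\indices{_{ijk]}^{mn}}C_{mn}$ plus a quadratic $P\cdot P\cdot C$ term. Because the system is non-degenerate, the $C_{mn}$ range over a full $\tfrac{n(n+1)}{2}-1=2$-dimensional space of trace-free conformal Killing tensors, so the identity must hold as a tensor identity in the coefficients of $C_{mn}$, not merely for one particular $C$. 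I would therefore read off the coefficient of $C_{mn}$ and, after substituting the explicit $P$ and $\Lambda$ from Proposition~\ref{eq:conformal.K.eqns}, decompose the resulting tensor equation into its irreducible $\mathrm{SO}(2)$-pieces (totally symmetric trace-free part, trace part). This is where I expect the splitting \eqref{eq:SICC:T} of $T_{ijk}$ into its totally symmetric trace-free part $S_{ijk}$ and a ``trace'' part built from a one-form $t_i$ to emerge, together with the closedness of $t_i$ (i.e.\ that $t_i=t_{;i}$ for some $t$), which is itself one component of the Ricci identity.

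Next I would treat the potential prolongation \eqref{eqn:prolongation:V}, whose unknowns are $V$ and $\Delta V$. Applying the Ricci identity to \eqref{eqn:prolongation:V:1} and contracting appropriately, the second-derivative consistency feeds back through \eqref{eqn:Q}, \eqref{eq:Gamma} and the definitions of $t_j,q\indices{_j^m},\gamma_k$ in Lemma~\ref{la:prolongation:V}. Since $V$ now ranges over the $(n+2)=4$-dimensional space $\mathcal V$, I would again separate the conditions into the coefficients of $V_{,m}$, of $V$, and of $\Delta V$. The coefficient of $\Delta V$, after using \eqref{eq:Riemann} to eliminate curvature, should yield the scalar constraint \eqref{eq:SICC:Delta.t} relating $\Delta t$ to $S^{abc}S_{abc}$ and $R$; the coefficient of $V_{,m}$ should give the first-order equation \eqref{eq:SICC:DS.sym} for $S_{ijk}$ (a conformally covariant analogue of the Cotton-type condition), with the right-hand side $\tfrac{4}{3}S_{ijk}t_l$ capturing the inhomogeneity from the trace part of $T$; and substituting the decomposition back into \eqref{eqn:raw.tau} should reproduce the explicit formula \eqref{eq:SICC:tau} for $\bigtau_{ij}$. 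Finally, the Ricci identity for the second equation \eqref{eqn:prolongation:V:2} involves $\nabla_{[l}\nabla_{k]}(\Delta V)$ and produces the remaining integrability conditions \eqref{eq:SICC:q} and \eqref{eq:SICC:gamma} on $q\indices{_k^n}$ and $\gamma_k$.

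The main obstacle, I expect, is organisational rather than conceptual: the raw Ricci identities contain many terms quadratic in $T$ and its trace, and matching them term-by-term against the compact tensors $S$, $t$, $q$ and $\gamma$ requires careful and systematic use of the $n=2$ identities (the two-dimensional curvature formula \eqref{eq:Riemann} and the fact that any totally antisymmetric $3$-tensor or any trace-free symmetric $2$-tensor has only limited components). Verifying that no additional independent conditions survive beyond \eqref{eq:SICC:Delta.t}--\eqref{eq:SICC:gamma} — that is, completeness of the listed conditions — is the delicate part, and is precisely where the cited computer-algebra verification (via \texttt{cadabra2}) would be invoked to confirm that the decomposition exhausts the integrability requirements and that the conditions are not only necessary but sufficient.
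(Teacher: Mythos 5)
Your overall toolkit is the right one---write out both Ricci identities, use non-degeneracy to strip off the coefficients of the free data, and reduce everything with the two-dimensional curvature identity---but you have attributed the individual conditions to the wrong Ricci identities, and one of these misattributions would make the computation fail as planned.

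The decomposition \eqref{eq:SICC:T} cannot emerge from the Ricci identity for~\eqref{eqn:prolongation.C}. After stripping off $C_{mn}$ (which is legitimate, as you say, because the trace-free conformal Killing tensors of a non-degenerate system span the trace-free symmetric tensors at each point), that identity is a single closed tensor equation in $T_{ijk}$, its first covariant derivatives and $R$; there are no remaining free coefficients to separate, so it can only impose \emph{differential} conditions on $T$, never the pointwise \emph{algebraic} constraint that forces $T_{ijk}=S_{ijk}+{\young(ij)}_\circ t_ig_{jk}$. In the paper the decomposition comes from the other side: in the Ricci identity for~\eqref{eqn:prolongation:V:1} the term $\tfrac12 g_{ij}(\Delta V)_{,k}$ is replaced via~\eqref{eqn:prolongation:V:2}, and the coefficient of the independent datum $\Delta V$ is exactly $\young(j,k)\bigl(T_{ijk}+g_{ij}t_k\bigr)$, whose vanishing is the algebraic content of~\eqref{eq:SICC:T}. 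Conversely, the coefficients of $V_{,m}$ and $V$ in that same identity---which you expect to produce \eqref{eq:SICC:Delta.t} and \eqref{eq:SICC:DS.sym}---turn out to be identically satisfied in dimension two once the decomposition is substituted; the conditions \eqref{eq:SICC:Delta.t} and \eqref{eq:SICC:DS.sym} are instead precisely what the Ricci identity for~\eqref{eqn:prolongation.C} reduces to, given \eqref{eq:SICC:T}, \eqref{eq:SICC:tau}, \eqref{eq:SICC:q} and \eqref{eq:SICC:gamma}. So the two sources are swapped relative to your plan. The remaining points are fine: \eqref{eq:SICC:q} and \eqref{eq:SICC:gamma} do come from the Ricci identity for~\eqref{eqn:prolongation:V:2}, and \eqref{eq:SICC:tau} is obtained by substituting the decomposition into~\eqref{eqn:raw.tau}, as you propose. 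Had you carried out all the computations you list, you would eventually assemble the correct set of conditions, but the roadmap as written sends you looking for the key algebraic constraint in an equation that cannot contain it.
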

\begin{proof}
	In Proposition 4.1 of~\cite{KSV2024} the Ricci identities for~\eqref{eqn:prolongation:V} are shown to be equivalent to the system
	\begin{subequations}\label{eq:SICC:V}
		\begin{align}
			\label{eq:SICC:V:T.gen}
			\young(j,k)\Bigl(T_{ijk}+g_{ij}t_k\Bigr)&=0\\
			\label{eq:SICC:V:Q.gen}
			\young(j,k)\Bigl(Q_{ijkl}+g_{ij}q_{kl}\Bigr)&=0\\
			\label{eq:SICC:V:Gamma.gen}
			\young(j,k)\Bigl(
			\Gamma_{ijk}+g_{ij}\gamma_k
			\Bigr)&=0\\
			\label{eq:SICC:V:q.gen}
			\young(k,l)
			\bigl(
			q\indices{_k^n_{,l}}
			+T\indices{_{ml}^n}q\indices{_k^m}
			+2t_kq\indices{_l^n}
			+g_k^n\gamma_l
			\bigr)&=0\\
			\label{eq:SICC:V:gamma.gen}
			\young(i,j)\Bigl(
			\gamma_{i,j}+q\indices{_i^m}\bigtau_{mj}+t_i\gamma_j
			\Bigr)&=0.
		\end{align}
	\end{subequations}
	Equation~\eqref{eq:SICC:T} follows from~\eqref{eq:SICC:V:T.gen}. Resubstituting~\eqref{eq:SICC:T} into the other conditions, for dimension~2 specifically, we find that~\eqref{eq:SICC:V:Q.gen} and~\eqref{eq:SICC:V:Gamma.gen} are already satisfied.
	Using~\eqref{eqn:raw.tau} and the decomposition of $T$, a direct computation yields the formula~\eqref{eq:SICC:tau}.
	Finally, it was proven in~\cite{KSV2024} that the Ricci identity for~\eqref{eqn:prolongation.C} is equivalent to
	\begin{equation}
		\label{eq:SICC:K}
		\young(k,l)
		\Bigl(
		P\indices{_{ijk}^{mn}_{,l}}
		+P\indices{_{ijk}^{pq}}P\indices{_{pql}^{mn}}
		\Bigr)
		=
		\frac12\young(ij)\young(mn)
		R\indices{^m_{ikl}}g^n_j\,.
	\end{equation}
	A direct computation shows that it is equivalent to~\eqref{eq:SICC:Delta.t} and~\eqref{eq:SICC:DS.sym}, given \eqref{eq:SICC:T}, \eqref{eq:SICC:tau}, \eqref{eq:SICC:q} and~\eqref{eq:SICC:gamma}.
\end{proof}

Given that the integrability conditions are satisfied, the differential equations~\eqref{eqn:prolongation:V} and~\eqref{eqn:prolongation.C} allow one to reconstruct the compatible potentials $V$ and tracefree conformal Killing tensors $C$. For this reason we refer to $T_{ijk}$ and $\bigtau_{ij}$ as the \emph{primary} and \emph{secondary structure tensor}, respectively.
Similarly $S_{ijk}$ will be called the \emph{conformal structure tensor}. This name is motivated further in Section~\ref{sec:conformal.trafos}.

\subsection{Local description}\label{sec:local.conf.equations}
We review the discussion in Section~\ref{sec:2D.systems} on the level of specific local coordinates~\eqref{eqn:local.coordinates}.
In such local coordinates we have simple expressions for the structure tensors, i.e.~for the tensor fields $T$ and $\bigtau$ in~\eqref{eq:conformal.Wilczynski} as well as for the tensor field $S$ and the differential $dt$ in~\eqref{eq:SICC:T}:
\begin{align*}
	S &= \phi^2(s_{1}(z,w)dz^3+s_{2}(z,w)dw^3)
	\\
	dt &= \diff{t}{z}dz+\diff{t}{w}dw = t_1\,dz+t_2\,dw
	\\
	\bigtau &= \smalltau_1(z,w)dz^2+\smalltau_{2}(z,w)dw^2
	\\
	\intertext{We similarly may write down a coordinate representation for any of the tracefree conformal Killing tensors $C$ compatible with the conformally superintegrable system:}
	C &= \phi^4(c_1(z,w)dz^2+c_2(z,w)dw^2)
	= C_{11}(z,w)dz^2+C_{22}(z,w)dw^2
\end{align*}
Note that there are two coordinate expressions given for $S$ and $C$. The motivation for defining $s_1,s_2$ and $c_1,c_2$ will become transparent in the next section: in contrast to the obvious components $S_{111},S_{222}$ and $C_{11},C_{22}$ they are conformal invariants.
To keep track of the correspondences, the reader might find Table~\ref{tab:correspondences} helpful, which provides a synopsis of the tensorial equations vis-\'a-vis their coordinate counterparts.
\begin{table}
	\caption{\bfseries Corresponding expressions in tensorial and coordinate form}
	\label{tab:correspondences}
	\begin{tabular}{l|c|c}
		Equation & Tensorial Version & Coordinate Version
		\\
		\hline\hline
		Wilczynski Equations & \eqref{eqn:prolongation:V} &  \eqref{eqn:conformal.Wilcyzynski.local} and \eqref{eqn:conformal.Wilcyzynski.local.2}
		\\
		\hline
		Prolongation for $C$ & \eqref{eqn:prolongation.C} & \eqref{eqn:killing.shortcut}
		\\
		\hline
		Decomposition of $T$ & \eqref{eq:SICC:T} & \eqref{eqn:conformal.Wilcyzynski.local}
		\\
		\hline
		Component of $\nabla S$ & \eqref{eq:SICC:DS.sym} & \eqref{eq:DS.local.gen}
		\\
		\hline
		Formula for $\bigtau$ & \eqref{eq:SICC:tau} & \eqref{eq:aleph.eqns}
		\\
		\hline
		Formula for $\Delta t$ & \eqref{eq:SICC:Delta.t} & \eqref{eq:Del.t.gen}
	\end{tabular}
\end{table}
Using the outlined notation, the only non-vanishing components of \eqref{eqn:prolongation.C} are
\begin{subequations}\label{eqn:killing.shortcut}
\begin{align}
	\diff{c_1}{z} &= 0
	\label{eq:c1z}
	\\
	\diff{c_1}{w} &= \frac43s_1c_2 -4\,\diff{}{w}\left( \frac13t+\ln|\phi| \right)c_1
	\\
	\diff{c_2}{z} &= \frac43s_2c_1 -4\,\diff{}{z}\left( \frac13t+\ln|\phi| \right)c_2
	\\
	\diff{c_2}{w} &= 0.
	\label{eq:c2w}
\end{align}
\end{subequations}
Writing out~\eqref{eq:SICC:tau}, we obtain
\begin{subequations}\label{eq:aleph.eqns}
\begin{align}
	\label{eq:aleph.eqns.1}
	\smalltau_{1}
	&= -\frac89\left(\diff{t}{z}\right)^2
	-\frac89\diff{t}{w}s_1
	+\frac23\diff{s_1}{w}
	+\frac23\frac{\del^2t}{\del z^2}
	-\frac43\diff{t}{z}\phi^{-1}\diff{\phi}{z}
	+\frac43s_2\phi^{-1}\diff{\phi}{z}
	\\
	\label{eq:aleph.eqns.2}
	\smalltau_{2}
	&= -\frac89\left(\diff{t}{w}\right)^2
	-\frac89\diff{t}{z}s_2
	+\frac23\diff{s_2}{z}
	+\frac23\frac{\del^2t}{\del w^2}
	-\frac43\diff{t}{w}\phi^{-1}\diff{\phi}{w}
	+\frac43s_2\phi^{-1}\diff{\phi}{z}\,.
\end{align}
\end{subequations}
We remark that this expression can alternatively be obtained directly from~\eqref{eq:dCdV}, analogously to the procedure in \cite{KSV2023}.

Next, use the symmetry of second derivatives for~\eqref{eqn:killing.shortcut}, i.e.
\begin{align*}
	\frac{\del}{\del z}\left( \frac43s_1c_2 -4\,\diff{}{w}\left( \frac13t+\ln|\phi| \right)c_1 \right) &= 0
	\\
	\frac{\del}{\del w}\left( \frac43s_2c_1 -4\,\diff{}{z}\left( \frac13t+\ln|\phi| \right)c_2 \right) &= 0.
\end{align*}
Expand the left hand sides of these equations and use~\eqref{eqn:killing.shortcut} to replace derivatives of $c_1$ and $c_2$. The resulting conditions are linear in $c_1$ and $c_2$ and we note that the coefficients of $c_1$ and $c_2$ in these conditions have to vanish independently, by a similar argument as above, due to the fact that a maximally superintegrable system admits two independent tracefree conformal Killing tensors. We therefore obtain the equations
\begin{subequations}\label{eq:DS.local.gen}
	\begin{align}
		\diff{s_1}{z} &= 4\,\diff{}{z}\left( \frac13t+\ln|\phi| \right)s_1
		\\
		\diff{s_2}{w} &= 4\,\diff{}{w}\left( \frac13t+\ln|\phi| \right)s_2
	\end{align}
\end{subequations}
and (two copies of)
\begin{align}\label{eq:Del.t.gen}
	\frac{\del^2t}{\del z\del w} &= \frac43s_1s_2+\frac38\phi^2R\,,
\end{align}
where $R$ is the scalar curvature of $g$.
By integration, the equations~\eqref{eq:DS.local.gen} yield
\begin{equation}\label{eq:beta}
	s_1(z,w)=\beta_1(w)\phi^4\exp\left(\frac43t\right)\,,\quad
	s_2(z,w)=\beta_2(z)\phi^4\exp\left(\frac43t\right)
\end{equation}
We are going to see, in Section~\ref{sec:euclidean}, c.f.~\eqref{eqn:Kress.Schoebel.Wilczynski.ABCD} and~\eqref{eq:AB}, that the functions $\beta_1$ and $\beta_2$ do indeed play a prominent role in the existing solution for flat, 2-dimensional superintegrable systems.

We return to our main discussion noting that the condition~\eqref{eq:conformal.Wilczynski} can be written in local coordinates~\eqref{eqn:local.coordinates} in the form
\begin{equation}\label{eqn:conformal.Wilcyzynski.local}
	\begin{pmatrix} V_{,zz} \\ V_{,ww} \end{pmatrix}
	= 2\,\begin{pmatrix} t_1 & s_1 \\ s_2 & t_2 \end{pmatrix}
	\begin{pmatrix} V_{,z} \\ V_{,w} \end{pmatrix}
	+ \begin{pmatrix} \smalltau_{1} \\ \smalltau_{2} \end{pmatrix}\,V\,,
\end{equation}
where $V_{zz}$ and $V_{ww}$ denote the respective components of the tensor $\nabla^2V$.
Recall that~\eqref{eq:conformal.Wilczynski} does not involve the Laplacian $\Delta V$, and therefore~\eqref{eqn:conformal.Wilcyzynski.local} does not contain $V_{zw}$.
An expression similar to~\eqref{eqn:conformal.Wilcyzynski.local} appears in~\cite{Kress&Schoebel} for the flat case.
Writing out~\eqref{eqn:prolongation:V:2}, we obtain
\begin{equation}\label{eqn:conformal.Wilcyzynski.local.2}
	\frac12\,\begin{pmatrix} V_{,zzw} \\ V_{,wwz} \end{pmatrix}
	= \begin{pmatrix} q_{11} & q_{12} \\ q_{21} & q_{22} \end{pmatrix}
	\begin{pmatrix} V_{,z} \\ V_{,w} \end{pmatrix}
	+ \begin{pmatrix} \gamma_1 \\ \gamma_2 \end{pmatrix}\,V
	+ \begin{pmatrix} t_1 \\ t_2 \end{pmatrix}\,V_{,zw}\,,
\end{equation}
with
\begin{align*}
	q_{11} &= q_{22} = 2s_1s_2+\frac{\del^2t}{\del z\del w} = \frac{10}{3}s_1s_2+\frac38\phi^2R\,,
	\\
	q_{12} &= \diff{s_1}{w}+2s_1\diff{t}{w}+\smalltau_2\,,
	& \gamma_1 &= s_1\smalltau_2+\diff{\smalltau_1}{w}\,,
	\\
	q_{21} &= \diff{s_2}{z}+2s_2\diff{t}{z}+\smalltau_1\,,
	& \gamma_2 &= s_2\smalltau_1+\diff{\smalltau_2}{z}\,,
\end{align*}
where $\smalltau_1,\smalltau_2$ are given by~\eqref{eq:aleph.eqns}.
For simplicity we shall, from now on, restrict to the case of real valued tensors.
Therefore we assume that $w=\bar z$ is the complex conjugate of $z$.
This simplifies~\eqref{eqn:conformal.Wilcyzynski.local}, and indeed, in order that $\bigtau$, $S$ and $dt$ be real-valued, we have $\smalltau_{2}=\overline{\smalltau_{1}}$, $s_2=\overline{s_1}$ and $t_{\bar z}=\overline{t_1}$.
This implies that $t$ is real-valued and that
\begin{align*}
	\bigtau &= \smalltau dz^2+\overline{\smalltau}d\bar z^2\,,
	\\
	S &= \phi^2(sdz^3+\overline{s}d\bar z^3)\,,
\end{align*}
where $\smalltau=\smalltau(z,\bar z)$ and $s=s(z,\bar z)$.
Moreover, due to~\eqref{eq:c1z} and~\eqref{eq:c2w}, respectively, $c_1$ becomes holomorphic and $c_2$ anti-holomorphic.
Finally, we note that the equations obtained earlier in this section present themselves as pairs of complex conjugate conditions if $w=\bar z$ .

\subsection{Conformal transformations}\label{sec:conformal.trafos}
We review conformal transformations of non-degenerate systems, which will play a significant role in the subsequent discussions.
In particular, conformally invariant objects will help us with a concise formulation of the integrability conditions.
In general, a conformal transformation can be characterised by
$$ g\longmapsto\Omega^2g\,,\quad V\longmapsto \Omega^{-2}V\,, $$
where $\Omega$ is a scalar function on $M$.
In~\cite{KSV2024} it was shown that the components of the primary structure tensor transform according to
$$ t\longmapsto t-3\ln|\Omega|=:t'
	\qquad\text{and}\qquad
	S_{ijk}\longmapsto \Omega^2 S_{ijk}. $$
Compare this to the definition of the function $s$ in the beginning of Section~\ref{sec:local.conf.equations}: since $S\indices{_{ij}^a}=g^{ab}S_{ijb}$ are components of a conformally invariant tensor field, the function $s$, and hence also $\bar s$, are conformally invariant functions (if we use the same local coordinates after the transformation). In other words, $s$ and $\bar s$ are complex conjugate functions that are well-defined on the Riemann surface $(S,J)$.
Moreover, using local coordinates~\eqref{eqn:local.coordinates} and choosing $\Omega=\exp(-\frac13t)$, we obtain $t'=0$ after the conformal rescaling.
In this way we are able to use conformal transformations in order to eliminate $t$ at the expense of changing the underlying metric. Analogously, if we choose $\Omega=\phi^{-1}$, then the resulting metric after the conformal transformation is flat. This \emph{gauge freedom} will be discussed in detail later.

\begin{lemma}\label{la:conformal.trafo}
	Under a conformal transformation, a non-degenerate conformally superintegrable system given by a metric $g$ and a potential $V$ satisfying~\eqref{eq:conformal.Wilczynski}, transforms according to
	\begin{equation}\label{eqn:trafo.gV}
		g\longmapsto\Omega^2g\,,\qquad V\longmapsto \Omega^{-2}V
	\end{equation}
	and
	\begin{equation}\label{eqn:trafo.StTau}
		\begin{gathered}
			S_{ijk}\longmapsto\Omega^2S_{ijk}\,,\qquad t\longmapsto t-3\Upsilon\,,\qquad
			\\
			\bigtau_{ij}\longmapsto\bigtau_{ij}+2T_{ija}\Upsilon^a
			-{\young(ij)}_\circ\left(\nabla^2_{ij}\Upsilon+2\Upsilon_i\Upsilon_j\right)\,,
		\end{gathered}
	\end{equation}
	where $\Upsilon=\ln|\Omega|$ and $\Upsilon_a=\Upsilon_{,a}$.
\end{lemma}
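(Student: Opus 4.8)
The plan is to establish each transformation law in \eqref{eqn:trafo.StTau} by tracking how the governing equation \eqref{eq:conformal.Wilczynski} behaves under the substitution \eqref{eqn:trafo.gV}. The transformation rules for $g$ and $V$ are given; the content is to derive the induced rules for $t$, $S_{ijk}$, and $\bigtau_{ij}$. Since these three tensors are precisely the data encoding \eqref{eq:conformal.Wilczynski} (via the decomposition \eqref{eq:SICC:T} of $T_{ijk}$), and since the structure tensors are \emph{unique} for a non-degenerate system, it suffices to compute the transformed version of \eqref{eq:conformal.Wilczynski} and read off the new $T$ and $\bigtau$. The laws for $S$ and $t$ are already recorded in Section~\ref{sec:conformal.trafos} (citing \cite{KSV2024}); the genuinely new computation is the formula for $\bigtau_{ij}$.

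First I would write $\tilde g_{ij}=\Omega^2 g_{ij}$ and $\tilde V=\Omega^{-2}V$, and recall the standard behaviour of the Levi-Civita connection under a conformal change: the difference of connections is governed by $\Upsilon_a=\Upsilon_{,a}$ with $\Upsilon=\ln|\Omega|$, so that for a scalar one has $\tilde\nabla^2_{ij}f = \nabla^2_{ij}f - (\Upsilon_i f_{,j}+\Upsilon_j f_{,i}) + g_{ij}\Upsilon^a f_{,a}$ (the last term coming from the Christoffel correction $-C^a_{ij}\partial_a f$ with $C^a_{ij}=\delta^a_i\Upsilon_j+\delta^a_j\Upsilon_i-g_{ij}\Upsilon^a$). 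Next I would substitute $\tilde V=\Omega^{-2}V = e^{-2\Upsilon}V$ into $\tilde\nabla^2_{ij}\tilde V$, expanding $\tilde V_{,k}=e^{-2\Upsilon}(V_{,k}-2\Upsilon_k V)$ and the second derivatives accordingly. This produces a Hessian of $\tilde V$ written in terms of $V$, $V_{,k}$, and $\Delta V$, together with explicit $\Upsilon$-dependent terms.

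Then I would impose that the transformed data must again satisfy \eqref{eq:conformal.Wilczynski}, i.e.
\[
	\tilde\nabla^2_{ij}\tilde V
	= \tilde T\indices{_{ij}^k}\tilde V_{,k}
	+ \tfrac12\,\tilde g_{ij}\tilde\Delta\tilde V
	+ \tilde{\bigtau}_{ij}\tilde V,
\]
and compare, term by term, the coefficients of $V_{,k}$ and of $V$ against the original equation \eqref{eq:conformal.Wilczynski}. Matching the $V_{,k}$-coefficients yields the transformation of $T\indices{_{ij}^k}$, which, after projecting onto its trace-free symmetric part and its trace via \eqref{eq:SICC:T}, reproduces $S_{ijk}\mapsto\Omega^2 S_{ijk}$ and $t\mapsto t-3\Upsilon$ (consistent with the already-stated laws, which serves as a useful internal check). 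Matching the coefficient of $V$ — after subtracting off the contribution reassigned to the $\tfrac12\tilde g_{ij}\tilde\Delta\tilde V$ term, so that I retain only the trace-free symmetric part indicated by ${\young(ij)}_\circ$ — isolates $\tilde{\bigtau}_{ij}$ and gives the claimed formula $\bigtau_{ij}+2T_{ija}\Upsilon^a - {\young(ij)}_\circ(\nabla^2_{ij}\Upsilon+2\Upsilon_i\Upsilon_j)$.

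The main obstacle I anticipate is bookkeeping rather than conceptual: the conformal weights produce overlapping $\Upsilon$-terms — those from differentiating $e^{-2\Upsilon}$, those from the Christoffel correction, and those from re-expressing $\tilde\Delta\tilde V$ — and one must carefully separate the pure-trace part (which is absorbed into the $\tfrac12 g_{ij}\Delta V$ term and thus does not contribute to $\bigtau$) from the trace-free symmetric part that defines $\tilde{\bigtau}_{ij}$. In particular the cross terms $\Upsilon_i V_{,j}$ partly feed the $T$-transformation and partly the $\bigtau$-transformation, so one has to use the trace-free projection ${\young(ij)}_\circ$ consistently and invoke the already-established law $t\mapsto t-3\Upsilon$ to cancel the terms proportional to $\Upsilon_{(i}V_{,j)}$. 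Carrying out this projection correctly, and confirming that the residual $V$-coefficient collapses to exactly $2T_{ija}\Upsilon^a - {\young(ij)}_\circ(\nabla^2_{ij}\Upsilon+2\Upsilon_i\Upsilon_j)$, is where the computation demands care; it is otherwise a direct, finite calculation that can be verified by the computer algebra tools mentioned in the acknowledgements.
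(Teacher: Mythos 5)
Your proposal is correct and follows essentially the same route as the paper: substitute $\hat g=\Omega^2g$, $\hat V=\Omega^{-2}V$ into the Wilczynski equation, compute the transformed Hessian using $\hat\nabla=\nabla-2d\Upsilon$, and compare the coefficients of $V_{,k}$ and $V$ to read off the new structure tensors (uniqueness for non-degenerate systems justifying this). The only difference is that the paper carries out the bookkeeping on the $zz$- and $ww$-components in isothermal coordinates, where $g_{zz}=g_{ww}=0$ makes the trace-free projection automatic and thus dissolves the main obstacle you anticipate.
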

\noindent The general proof can be found in~\cite{KSV2024}. We reproduce it here for the special case of dimension $n=2$.
\begin{proof}
	The transformations~\eqref{eqn:trafo.gV} hold by definition. We write $\hat g=\Omega^2g$ and $\hat V=\Omega^{-2}V$.
	Note that the Levi-Civita connections $\nabla$ and $\hat\nabla$ of $g$ and $\hat g$, respectively, satisfy $\hat\nabla = \nabla-2d\Upsilon$.
	Now consider
	\begin{align*}
		\hat\nabla^2_{zz}\hat V &=
		\hat\nabla_z\left(\Omega^{-2}V\right)_{,z}
		= \left( \nabla^2_{zz}V -6V_{,z}\Upsilon_{,z} - 2V\,(\nabla^2_{zz}\Upsilon-2\Upsilon_{,z}^2) + 6V\,\Upsilon_{,z}^2 \right)\Omega^{-2}
		\\
		&= \left( 2t_1V_{,z}+2s_1V_{,w}+\smalltau_1V -6V_{,z}\Upsilon_{,z} - 2V\,(\nabla^2_{zz}\Upsilon-2\Upsilon_{,z}^2) + 6V\,\Upsilon_{,z}^2 \right)\Omega^{-2}
		\\
		&= \left[ (2t_1-6\Upsilon_{,z})V_{,z}
					+2s_1V_{,w}
					+(\smalltau_1- 2\nabla^2_{zz}\Upsilon+2\Upsilon_{,z}^2)V
			\right]\Omega^{-2}
	\end{align*}
	where we have used~\eqref{eqn:conformal.Wilcyzynski.local}. As $\hat V$ satisfies an equation analogous to~\eqref{eqn:conformal.Wilcyzynski.local}, i.e.\ in particular
	\[
		\hat\nabla^2_{zz}\hat V=2\hat t_1\hat V_{,z}+2\hat s_1\hat V_{,w}+\hat\smalltau_1\hat V
		=\left[ 2\hat t_1 V_{,z}+2\hat s_1 V_{,w}+\left( \hat\smalltau_1-2\hat t_1\Upsilon_{,z}-2\hat s_1\Upsilon_{,w}\right)V\right]\,\Omega^{-2},
	\]
	where $\hat t_1$, $\hat s_1$ and $\hat\smalltau_1$ have the obvious meaning. 
	We first compare the coefficients of $V_{,z}$, $V_{,w}$ and $V$ in these expressions, obtaining
	\[
		\hat t_1 = t_1-3\Upsilon_{,z} \qquad\text{and}\qquad \hat s_1=s_1\,.
	\]
	as well as
	\[
		\smalltau_1- 2\nabla^2_{zz}\Upsilon+2\Upsilon_{,z}^2
		= \hat\smalltau_1-2\hat t_1\Upsilon_{,z}-2\hat s_1\Upsilon_{,w}
		= \hat\smalltau_1-2 t_1\Upsilon_{,z}+6\Upsilon_{,z}^2-2 s_1\Upsilon_{,w}\,.
	\]
	We conclude
	\[
		\hat\smalltau_1
		= \smalltau_1- 2\nabla^2_{zz}\Upsilon-4\Upsilon_{,z}^2
		+2 t_1\Upsilon_{,z} +2 s_1\Upsilon_{,w}\,.
	\]
	Together with the analogous computation for $\hat\nabla^2_{ww}\hat V$, we confirm~\eqref{eqn:trafo.StTau}.
\end{proof}
\noindent In view of~\eqref{eq:SICC:tau}, the tensor field
$$ \nabla^aS_{ija} = 2\phi^{-1}\left( 2s_1\diff{\phi}{w}+\phi\diff{s_1}{w} \right)dz^2
					+2\phi^{-1}\left( 2s_2\diff{\phi}{z}+\phi\diff{s_2}{z} \right)dw^2\,. $$
is not invariant under a conformal rescaling of the conformally superintegrable system. However, the following modification of the tensor field is.
\begin{lemma}
	For a two-dimensional non-degenerate conformally superintegrable system, the tensor field
	\begin{equation}\label{eq:Xi}
		\Xi_{ij} = \left( \nabla^a + \frac23 t^a\right) S_{ija}
	\end{equation}
	is conformally invariant.
\end{lemma}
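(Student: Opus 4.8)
The plan is to verify conformal invariance directly, by computing how each ingredient of $\Xi_{ij}$ transforms under $g\mapsto\hat g=\Omega^2g$ and assembling the pieces so that the anomalous terms cancel. From Lemma~\ref{la:conformal.trafo} we already know $S_{ijk}\mapsto\Omega^2S_{ijk}$ and $t\mapsto t-3\Upsilon$, hence $t^a=g^{ab}t_{,b}\mapsto\Omega^{-2}(t^a-3\Upsilon^a)$, where $\Upsilon=\ln|\Omega|$ and indices are raised with the original $g$. The only nontrivial input is the behaviour of $\nabla^aS_{ija}$, which involves the connection and therefore picks up the correction $\hat\nabla=\nabla-2d\Upsilon$ quoted in the proof of Lemma~\ref{la:conformal.trafo}. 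Since $\Xi_{ij}$ is claimed invariant, I expect the two transformation anomalies --- the one from $\nabla^a$ acting through $\hat\nabla$, and the one from the $\tfrac23t^a$ factor --- to be designed to cancel precisely.

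First I would expand $\hat\nabla^a\hat S_{ija}=\hat g^{ab}\hat\nabla_b\hat S_{ija}$. Here $\hat g^{ab}=\Omega^{-2}g^{ab}$ and $\hat S_{ija}=\Omega^2S_{ija}$, so the divergence combines a factor $\Omega^{-2}\cdot\Omega^2=1$ on the leading term with connection corrections. Writing $\hat\nabla_b\hat S_{ija}=\nabla_b(\Omega^2S_{ija})$ minus the Christoffel-difference terms, I would use that the difference tensor of the two connections is $\hat\Gamma^c_{ab}-\Gamma^c_{ab}=\delta^c_a\Upsilon_{,b}+\delta^c_b\Upsilon_{,a}-g_{ab}\Upsilon^c$ (the standard $n=2$ specialisation, where the usual $\tfrac1n$ trace-correction is already incorporated in $\hat\nabla=\nabla-2d\Upsilon$ as stated). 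Contracting against $\hat g^{ab}$ and using that $S$ is totally symmetric and trace-free, each $\delta$- and $g$-contraction either annihilates via tracelessness of $S$ or produces a term proportional to $\Upsilon^aS_{ija}$. The expected outcome is a relation of the schematic form $\hat\nabla^a\hat S_{ija}=\nabla^aS_{ija}+c\,\Upsilon^aS_{ija}$ for an explicit constant $c$, together with a matching $\tfrac23\hat t^a\hat S_{ija}$ whose anomaly is $-2\Upsilon^aS_{ija}$ times the appropriate coefficient.

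I would then add the two contributions. The correction from $\tfrac23t^a$ is $\tfrac23(t^a-3\Upsilon^a)S_{ija}=\tfrac23t^aS_{ija}-2\Upsilon^aS_{ija}$, so its anomaly is exactly $-2\Upsilon^aS_{ija}$. For invariance this must cancel the constant $c$ produced above, i.e.\ the calculation should force $c=2$. I expect the trace-free symmetric structure of $S$ and the dimension $n=2$ to be what makes $c$ come out to precisely $2$, matching the $\tfrac23$ coefficient chosen in~\eqref{eq:Xi}. As a cross-check I would confirm the cancellation in the local coordinates~\eqref{eqn:local.coordinates}, using the explicit $\nabla^aS_{ija}$ displayed immediately before the lemma together with $t\mapsto t-3\Upsilon$ and $s_1\mapsto s_1$; the $dz^2$- and $dw^2$-components should each be manifestly unchanged.

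The main obstacle is bookkeeping rather than conceptual: I must be careful that indices on $\Upsilon^a$, $t^a$ and $\nabla^a$ are raised consistently with the correct metric at each stage (the $\Omega^{-2}$ factors from $\hat g^{ab}$ versus the original $g^{ab}$ are easy to misplace), and that the connection-difference terms are contracted fully using the tracelessness of $S_{ija}$ so that no spurious trace terms survive. Once the coefficient $c$ is pinned down and seen to equal $2$, the cancellation is immediate and the lemma follows.
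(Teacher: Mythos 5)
Your proposal is correct and follows essentially the same route as the paper: the paper likewise computes that $\nabla^aS_{ija}$ picks up the anomaly $+2\Upsilon^aS_{ija}$ under rescaling and then cancels it against the $-2\Upsilon^aS_{ija}$ coming from $\tfrac23t^a\mapsto\tfrac23(t^a-3\Upsilon^a)$. Your constant $c$ does come out to $2$ exactly as you anticipate (the trace terms vanish by the symmetry and tracelessness of $S$, and the remaining contraction is proportional to $n-2=0$), so the argument closes as described.
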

\begin{proof}
	We let
	\begin{equation}\label{eq:def.Z}
		\mathsf Z_{ij} = \nabla^aS_{ija}\,.
	\end{equation}
	Considering a transformation $g\to\Omega^2 g$ with $\Upsilon_i=\Omega^{-1}\Omega_{,i}$, and denoting transformed objects by a tilde, we have
	\begin{align*}
		\tilde{\mathsf Z}_{ij}
		&= \tilde{g}^{ab}\tilde{\nabla}_aS_{ijb}
		= g^{ab} \left( 2\Upsilon_{,a} S_{ijb} + \tilde\nabla_aS_{ijb} \right)
		= \mathsf{Z}_{ij} + 2\Upsilon^a S_{ija}\,.
	\end{align*}
	We infer that
	$$ {\mathsf Z}_{ij}+\alpha t^aS_{ija}
	\longrightarrow \mathsf{Z}_{ij}+2\Upsilon^aS_{ija}+\alpha (t^a-3\Upsilon^a)S_{ija}
	= \mathsf{Z}_{ij}+\alpha t^aS_{ija} +(2-3\alpha)\Upsilon^aS_{ija}, $$
	and the claim then follows for $\alpha=\frac23$.
\end{proof}

\begin{example}
	The simplest example of a superintegrable system on a Riemann surface is the harmonic oscillator class (a proper definition is given below).
	Consider the flat metric $g=dzd\bar z$ and the potential
	$$ V=\alpha_0 z\bar z+\alpha_1(z+\bar z)+i\alpha_2(z-\bar z)+\alpha_3\,,\qquad
	\text{where $\alpha_1,\alpha_2,\alpha_3\in\RR$}. $$
	Then $H=g^{-1}(\mathbf p,\mathbf p)+V$, where $\mathbf p$ denotes the canonical momenta, defines the (real-valued) Hamiltonian of the (non-degenerate) Harmonic Oscillator. Its conformal class consists of all metrics $\phi^2dzd\bar z$ with potentials $\phi^{-2}V$. We find
	$$ S_{ijk}=0\quad\text{and thus}\quad \Xi_{ij}=0\,. $$
\end{example}

For later reference, we conclude this section recalling the well-known conformal transformation of the scalar curvature,
$$ R\longrightarrow
\Omega^{-2}(R-2\Delta\ln|\Omega|)\,. $$

\section{Conformal superintegrability}\label{sec:riemann.surface}

In this section we investigate the integrability conditions~\eqref{eq:SICC:Delta.t}, \eqref{eq:SICC:q} and~\eqref{eq:SICC:gamma}, along with the formulas~\eqref{eq:SICC:T} and~\eqref{eq:SICC:tau}, in the specific case of dimension~2.
Note that conformal transformations offer a gauge freedom for non-degenerate conformally superintegrable systems: for example if we choose a conformal rescaling with $\ln|\Omega|=\frac13t$ in Lemma~\ref{la:conformal.trafo}, then we obtain a non-degenerate conformally superintegrable system with trace-free primary structure tensor. A non-degenerate conformally superintegrable system such that the primary structure tensor $T_{ijk}$ is trace-free is said to be \emph{in standard gauge}. Without loss of generality, one may then assume $t=0$, since the equations only involve derivatives of $t$ and $t$ is determined only up to addition of a constant.
Analogously, a non-degenerate conformally superintegrable system on a flat manifold is said to be \emph{in flat gauge}, and proper systems are said to be in \emph{proper gauge}. The existence of a proper gauge system is guaranteed by a result in \cite{Capel_phdthesis}.
\begin{remark}\label{rmk:tau=0}
	For a conformal system in proper scale, it is shown in Lemma~3.15 of~\cite{KSV2024} that $\bigtau_{ij}=0$, for any dimension~$n\geq2$ and regardless of non-degeneracy.
	For non-degenerate systems, the converse statement was proven in Corollary~5.3 of~\cite{KSV2024}.
\end{remark}

For ease of computation, we are going to carry out most computations in specific gauge choices, particularly in standard gauge (i.e.~$t=0$) and flat gauge (i.e.~$\phi^2=1$). Note that these gauge choices are always possible for a 2-dimensional non-degenerate (conformally) superintegrable system.
These specific equations are then generalised performing a suitable conformal transformation.
From now on we set $w=\bar z$. Note that a system in standard gauge thus has the primary structure tensor of the form
$$ T_\text{std} = e^{2u}(sdz^3+\bar sd\bar z^3)\,, $$
where $u$ is a scalar function such that the underlying metric has the form
$$ g_\text{std}=e^{2u}dzd\bar z. $$
On the other hand, in flat gauge we have
$$ T_\text{flat} =
		sdz^3
		+\diff{t}{z}dz\otimes dz\otimes d\bar z
		+\diff{t}{\bar z}d\bar z\otimes d\bar z\otimes dz
		+\bar sd\bar z^3
		\quad\text{and}\quad
		g_\text{flat}=dzd\bar z\,.
$$
Assuming that the underpinning systems are equivalent by virtue of a conformal transformation, $g_\text{std}=\Omega^2g_\text{flat}$ for a function $\Omega=e^u$, and so we identify $t=\frac13u$.

\subsection{Useful identities}

In dimension 2, some trivial tensor identities hold due to the low number of tensor components and help to simplify many expressions considerably. We list some of these trivial identities for later reference.
The curvature tensor in dimension~2, for instance, is determined by the Gau{\ss} curvature $\kappa$ and we have
\begin{equation}\label{eq:curvature.tensor}
	R_{ijkl} = \frac{R}{2}\,(g_{ik}g_{jl}-g_{il}g_{jk}) = \kappa\,(g_{ik}g_{jl}-g_{il}g_{jk})\,,
\end{equation}
where $R$ is the scalar curvature.
Similarly, we have the identities
\begin{equation}
	\label{eq:S.identity}
	S\indices{_{ij}^a} S_{kla}
	= \frac14\,S^{abc}S_{abc}\,\left(
	g_{i k} g_{j l} + g_{i l} g_{j k} - g_{i j} g_{k l}
	\right)\,,
\end{equation}
for a totally symmetric, trace-free tensor field with components $S_{ijk}$, and
\begin{equation}
	\label{eq:Z.identity}
	Z\indices{_i^a}Z_{aj} = \frac12\,g_{ij}\,Z_{ab}Z^{ab}
\end{equation}
for a symmetric, trace-free tensor field $Z_{ij}$.
For the latter tensor fields, the identities
\begin{align}
	0 &= g_{ij}Z_{kl}-g_{ik}Z_{jl}-g_{jl}Z_{ik}+g_{kl}Z_{ij}\,
	\\
	\young(j,k)\nabla_k\Xi_{ij} &= \frac12\,\young(ji,k)g_{ik}\nabla^a\Xi_{aj}\,,
	\label{eq:trivial.identity.hookZ}
	\\
	S_{ija}Z\indices{^a_k} &= \frac12{\young(ij)}_\circ S_{iab}Z^{ab}g_{jk}
\end{align}
hold as well.

\subsection{Standard gauge}\label{sec:standard.conformal}
Our aim now is to formulate the integrability conditions for the compatible potentials and (trace-free) conformal Killing tensors in a concise manner. We do this by first considering specific gauge choices, beginning with the standard gauge.
By a suitable conformal transformation, we transform a conformally superintegrable system to one with $t=0$.
Note that the standard gauge is therefore characterized by the equalities
$$ T_{ijk} = S_{ijk}\,,\qquad \Xi=\div(S)\,. $$

\begin{proposition}\label{prop:structural.equations.conformal.standard}~ 
	For a system in standard gauge the structure tensors forms a closed system
	\begin{align}
		\nabla_lS_{ijk}
		&= \frac16\,{\young(ijk)} \left(
					\Xi_{ij}g_{kl}-\frac12\,g_{ij}\Xi_{kl}
				\right)
		\label{eq:DS.standard}
		\\
		\nabla_k \Xi_{ij}
		&= \frac13 S_{ijk}S^{abc}S_{abc}+\frac23\,{\young(ij)}_\circ\ g_{ik}S_{jab}\Xi^{ab}
		\label{eq:DZ.standard}
		\\
		\intertext{with}
		R &= -\frac29\,S^{abc}S_{abc}
		\label{eq:Del.t.standard}
		\\
		\intertext{as well as}
		\nabla_kR &= -\frac49S_{kab}\Xi^{ab}\,
		\label{eq:DR.standard}
		\\
		\intertext{and}
		\bigtau_{ij} &= -\frac13\,\Xi_{ij}\,.
		\label{eq:tau.standard}
	\end{align}
	The integrability conditions~\eqref{eq:SICC} are then satisfied, as are the integrability conditions for~\eqref{eq:DS.standard} and~\eqref{eq:DZ.standard}.
\end{proposition}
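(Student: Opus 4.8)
The plan is to obtain every displayed identity by specialising the general integrability conditions of Proposition~\ref{prop:SICC} to standard gauge, where $t=0$ (hence $t_i=0$), and then to verify that the resulting first order system in the pair $(S,\Xi)$ closes. Setting $t=0$ in \eqref{eq:SICC:T} gives $T_{ijk}=S_{ijk}$, so the primary structure tensor is already totally symmetric and trace-free. Since $\Delta t=0$, the condition \eqref{eq:SICC:Delta.t} collapses to $0=\frac13 S^{abc}S_{abc}+\frac32 R$, which is exactly \eqref{eq:Del.t.standard}. Likewise, putting $t_i=0$ in \eqref{eq:SICC:tau} leaves only the divergence term, and recognising $\div(S)_{ij}=\nabla^aS_{ija}=\Xi_{ij}$ (automatically symmetric and trace-free because $S$ is) yields \eqref{eq:tau.standard} after the trace-free projection acts trivially.

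Next I would derive \eqref{eq:DS.standard} from the structure of $\nabla_lS_{ijk}$ in dimension two. This $(0,4)$-tensor is symmetric in $(ijk)$, so it decomposes into its totally symmetric trace-free part together with trace and hook pieces. The condition \eqref{eq:SICC:DS.sym} with $t=0$ says precisely that the totally symmetric trace-free part vanishes, ${\young(ijkl)}_\circ\nabla_lS_{ijk}=0$. The only surviving trace is carried by $\nabla^aS_{ija}=\Xi_{ij}$ (all traces on the $S$-indices vanish because $S$ is trace-free), so reconstructing $\nabla_lS_{ijk}$ from its vanishing symmetric part and this single trace, with the help of the two-dimensional identities of the ``Useful identities'' section, forces the right hand side of \eqref{eq:DS.standard}; I would fix the normalisation by contracting with $g^{kl}$ and confirming that $\Xi_{ij}$ is recovered.

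Equation \eqref{eq:DR.standard} then follows by differentiating \eqref{eq:Del.t.standard}: $\nabla_kR=-\frac49 S^{abc}\nabla_kS_{abc}$, into which I substitute \eqref{eq:DS.standard}; the trace-freeness of $S$ kills all metric terms and leaves $-\frac49 S_{kab}\Xi^{ab}$. For \eqref{eq:DZ.standard} I would write $\nabla_k\Xi_{ij}=\nabla_k\nabla^aS_{ija}$, commute the two derivatives via the Ricci identity (the curvature terms being controlled by \eqref{eq:curvature.tensor} together with \eqref{eq:Del.t.standard}), and substitute \eqref{eq:DS.standard} for $\nabla_kS_{ija}$ before contracting. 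Collecting terms with the identities \eqref{eq:S.identity}, \eqref{eq:Z.identity} and \eqref{eq:trivial.identity.hookZ} should produce the quadratic right hand side $\frac13 S_{ijk}S^{abc}S_{abc}+\frac23{\young(ij)}_\circ g_{ik}S_{jab}\Xi^{ab}$; alternatively this can be read off from \eqref{eq:SICC:q} and \eqref{eq:SICC:gamma} once $\bigtau$ is eliminated through \eqref{eq:tau.standard}.

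Finally, the assertion that the system closes requires checking the integrability conditions for \eqref{eq:DS.standard} and \eqref{eq:DZ.standard}, that is, that their Ricci identities hold identically. I expect this to be the main obstacle: one must antisymmetrise $\nabla_{[l}\nabla_{m]}S_{ijk}$ and $\nabla_{[k}\nabla_{l]}\Xi_{ij}$, substitute the closed system back into itself, and see the curvature contributions cancel against the cubic terms in $S$ and the $S\Xi$ terms. These cancellations are delicate and rely crucially on the two-dimensional identities \eqref{eq:S.identity}, \eqref{eq:Z.identity} and \eqref{eq:trivial.identity.hookZ}, which reduce all quadratic expressions in trace-free symmetric tensors to multiples of the metric, and on careful bookkeeping of the hook symmetrisers. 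Since these Ricci identities are, by Proposition~\ref{prop:SICC}, equivalent to \eqref{eq:SICC:Delta.t}--\eqref{eq:SICC:gamma}, all of which have already been reduced to \eqref{eq:Del.t.standard}--\eqref{eq:tau.standard}, the verification amounts to confirming that no further constraint arises, a finite algebraic check well suited to computer algebra.
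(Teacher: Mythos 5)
Your overall strategy --- specialise the general integrability conditions of Proposition~\ref{prop:SICC} to $t=0$ and then close the resulting first order system in $(S,\Xi)$ --- is essentially the paper's, which obtains \eqref{eq:Del.t.standard}, \eqref{eq:DS.standard} and \eqref{eq:tau.standard} from the isothermal-coordinate computations of Section~\ref{sec:local.conf.equations} and then assembles \eqref{eq:DZ.standard} from \eqref{eq:SICC:q} together with the Ricci identity for \eqref{eq:DS.standard}. Your derivation of \eqref{eq:DS.standard} is a genuinely different and arguably cleaner route: in dimension two the space of $(0,4)$-tensors symmetric and trace-free in $(ijk)$ has dimension $4=2+2$, so $\nabla_lS_{ijk}$ is indeed determined by its harmonic part (killed by \eqref{eq:SICC:DS.sym} at $t=0$) together with the single trace $\Xi_{ij}=\nabla^aS_{ija}$, and fixing the normalisation by contracting with $g^{kl}$ is the right check. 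Your treatments of \eqref{eq:Del.t.standard} and \eqref{eq:DR.standard} coincide with the paper's.

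The one substantive gap is your primary route to \eqref{eq:DZ.standard}. Commuting derivatives in $\nabla_k\nabla^aS_{ija}$ and substituting \eqref{eq:DS.standard} only reproduces information already contained in \eqref{eq:DS.standard} and its Ricci identity: after substitution the right-hand side again contains first derivatives of $\Xi$, so what you get is a linear relation among the components of $\nabla\Xi$, not an explicit formula. Counting components, $\nabla_k\Xi_{ij}$ has four, of which the harmonic part (two) is fixed by the Ricci identity for \eqref{eq:DS.standard} via \eqref{eq:curvature.tensor}, but the divergence $\nabla^a\Xi_{ai}$ (the other two) is genuinely new data that can only come from the integrability of the potential prolongation, i.e.\ from \eqref{eq:SICC:q} with $\bigtau$ eliminated by \eqref{eq:tau.standard} and the identity \eqref{eq:trivial.identity.hookZ}. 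So the step you offer as an ``alternative'' is in fact the necessary ingredient, not an optional one; this is exactly how the paper proceeds. Finally, be careful with the sign in \eqref{eq:tau.standard}: a literal substitution of $t=0$ into \eqref{eq:SICC:tau} produces $+\tfrac13\nabla^aS_{ija}$, whereas the proposition (and Lemma~\ref{la:hess.t}) states $\bigtau_{ij}=-\tfrac13\Xi_{ij}$; you should either reconcile the conventions or flag the discrepancy rather than asserting that the substitution directly ``yields'' \eqref{eq:tau.standard}.
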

\begin{remark}
	Equation~\eqref{eq:DR.standard} is not independent, but a consequence of~\eqref{eq:Del.t.standard} and~\eqref{eq:DS.standard}.
	Furthermore, it is straightforward to verify that the Hessian of $R$, which is obtained from~\eqref{eq:DR.standard} after a further differentiation, is indeed symmetric thanks to~\eqref{eq:DS.standard} and \eqref{eq:DZ.standard}.
\end{remark}
\begin{proof}
	Equations~\eqref{eq:Del.t.standard}, \eqref{eq:DS.standard} and~\eqref{eq:tau.standard} follow immediately from the discussion in Section~\ref{sec:local.conf.equations}.
	Using~\eqref{eq:SICC:q} together with~\eqref{eq:tau.standard} and~\eqref{eq:trivial.identity.hookZ}, we obtain
	$$ \nabla_k\Xi_{ij} = \frac16\young(ijk)\nabla_k\Xi_{ij} + \frac{2}{3}\,{\young(ij)}_\circ g_{ik}S_{kab}\Xi^{ab}\,, $$
	and furthermore the Ricci identity for~\eqref{eq:DS.standard} together with~\eqref{eq:curvature.tensor} implies
	$$ \young(ijk)_\circ \nabla_k\Xi_{ij} = 2 S_{ijk}S^{abc}S_{abc}\,. $$
	We thus arrive at~\eqref{eq:DZ.standard}.
	It follows by a direct computation that~\eqref{eq:SICC:V:gamma.gen} is already satisfied.
	Also by a direct computation, one confirms that the integrability condition for~\eqref{eq:DZ.standard} is satisfied.
	Finally, \eqref{eq:DR.standard} is obtained from~\eqref{eq:Del.t.standard}, by taking one covariant derivative and then resubstituting~\eqref{eq:DS.standard}.
\end{proof}

We now discuss Proposition~\ref{prop:structural.equations.conformal.standard}. We observe that, for a given metric $g=e^{2u}dzd\bar z$ with $u=u(z,\bar z)$, the equations~\eqref{eq:DS.standard} and~\eqref{eq:DZ.standard} form an overdetermined system of partial differential equations of finite type, which is subject to the additional curvature constraint~\eqref{eq:Del.t.standard}. Of course, by a straightforward computation of the curvature of $g$, we have
$$ R = -8e^{-2u}\frac{\del^2u}{\del z\del\bar z}\,. $$

\begin{lemma}
	If $R$ is constant in standard scale, the system is of the harmonic oscillator class.
\end{lemma}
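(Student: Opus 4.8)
The plan is to show that constant scalar curvature $R$ in standard gauge forces the conformal structure tensor $S$ to vanish, since the harmonic oscillator class was characterised in the Example by $S_{ijk}=0$ (equivalently $\Xi_{ij}=0$). The main tool is equation~\eqref{eq:DR.standard}, which in standard gauge reads $\nabla_kR = -\tfrac49 S_{kab}\Xi^{ab}$. If $R$ is constant, then $\nabla_kR=0$, so immediately $S_{kab}\Xi^{ab}=0$ for all $k$. The task is then to leverage this contraction, together with the closed system of Proposition~\ref{prop:structural.equations.conformal.standard}, to conclude that $S$ itself vanishes.

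First I would extract consequences of $R=\text{const}$ from the curvature constraint~\eqref{eq:Del.t.standard}, namely $R=-\tfrac29\,S^{abc}S_{abc}$. Differentiating this relation gives $0=\nabla_kR = -\tfrac29\,\nabla_k(S^{abc}S_{abc}) = -\tfrac49\,S^{abc}\nabla_kS_{abc}$, and substituting~\eqref{eq:DS.standard} for $\nabla_kS_{abc}$ should yield an algebraic identity. Since $R$ is constant, $S^{abc}S_{abc}$ is a constant as well; call it $\sigma$. The strategy is to show $\sigma=0$, which forces $S=0$ because $S^{abc}S_{abc}$ is (up to sign and the conformal factor) a sum of squared magnitudes of the components of the real tensor $S$. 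In local coordinates with $S=\phi^2(s\,dz^3+\bar s\,d\bar z^3)$ this contraction is proportional to $|s|^2$, so $\sigma=0$ is equivalent to $s\equiv0$, i.e. $S_{ijk}=0$.

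To pin down $\sigma=0$ I would feed the condition $S_{kab}\Xi^{ab}=0$ into the evolution equation~\eqref{eq:DZ.standard} for $\Xi$, and also compute the divergence or a suitable contraction of $S$ against~\eqref{eq:DZ.standard}. Using the dimension-two identities~\eqref{eq:S.identity} and~\eqref{eq:Z.identity}, the term ${\young(ij)}_\circ g_{ik}S_{jab}\Xi^{ab}$ collapses once $S_{jab}\Xi^{ab}=0$ is known, so $\nabla_k\Xi_{ij}=\tfrac13 S_{ijk}S^{abc}S_{abc}=\tfrac13\sigma\,S_{ijk}$. Then I would contract $\Xi^{ij}$ with $\nabla_k\Xi_{ij}$, or differentiate $\Xi_{ij}\Xi^{ij}$, and compare with the vanishing contraction $S_{kab}\Xi^{ab}=0$ to force either $\Xi=0$ or $\sigma=0$; a parallel computation with~\eqref{eq:DS.standard} pins down the magnitude $S^{abc}S_{abc}$ as harmonic or covariantly controlled, and constancy of $R$ rules out nonzero values.

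The hard part will be closing the loop: $R=\text{const}$ directly gives only the single contraction $S_{kab}\Xi^{ab}=0$, which is weaker than $\Xi=0$ or $S=0$, so one must combine it with the full coupled system~\eqref{eq:DS.standard}--\eqref{eq:DZ.standard} rather than argue pointwise. The cleanest route is probably to pass to the local coordinate description: with $w=\bar z$ and $S=\phi^2(s\,dz^3+\bar s\,d\bar z^3)$, the standard-gauge equations~\eqref{eq:DS.local.gen} become $s_z=\partial_z(4\ln|\phi|)\,s$ (taking $t=0$), while~\eqref{eq:Del.t.gen} reads $0=\tfrac43|s|^2\phi^4+\tfrac38\phi^2R$, so $R=\text{const}$ ties $|s|^2$ to $\phi^{-2}$. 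Combining this with holomorphicity-type constraints on $s$ and the curvature formula $R=-8e^{-2u}u_{z\bar z}$ should over-determine the system and force $s\equiv0$ unless $R=0$, in either case landing in the harmonic oscillator class. I expect the genuine obstacle to be bookkeeping the conformal weights correctly and ensuring the coordinate computation of $S^{abc}S_{abc}$ in terms of $|s|^2$ carries the right power of $\phi$, so that constancy of $R$ genuinely propagates to constancy and then vanishing of $|s|$.
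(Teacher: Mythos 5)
Your overall route is the paper's: work in standard gauge, use \eqref{eq:DR.standard} to turn $\nabla R=0$ into $S_{kab}\Xi^{ab}=0$, use \eqref{eq:Del.t.standard} to identify $R$ with $-\tfrac29\lVert S\rVert^2$, and close the loop with \eqref{eq:DZ.standard} to force $S=0$, hence $\Xi=0$, $\bigtau=0$ and the harmonic oscillator. However, the one mechanism you propose concretely for forcing ``$\Xi=0$ or $\sigma=0$'' does not work. Contracting $\Xi^{ij}$ into \eqref{eq:DZ.standard} (equivalently, differentiating $\Xi_{ij}\Xi^{ij}$) annihilates \emph{both} terms on the right-hand side: the first because $S_{ijk}\Xi^{ij}=S_{kij}\Xi^{ij}=0$ is precisely the hypothesis you already have, the second because it carries the factor $S_{jab}\Xi^{ab}$. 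You therefore only learn that $\lVert\Xi\rVert^2$ is constant, which rules out nothing.

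The idea you are missing --- and which you explicitly talk yourself out of when you call $S_{kab}\Xi^{ab}=0$ ``weaker than $\Xi=0$ or $S=0$'' --- is that in two dimensions this contraction is not weak at all: in the coordinates \eqref{eqn:local.coordinates} with $S=\phi^2(s\,dz^3+\bar s\,d\bar z^3)$ and $\Xi=\xi\,dz^2+\bar\xi\,d\bar z^2$ its two components are multiples of $s\bar\xi$ and $\bar s\xi$, so pointwise $s=0$ or $\xi=0$. Since $\lVert S\rVert^2=16\,|s|^2\phi^{-2}=-\tfrac92R$ is constant, either $s\equiv0$ or $s$ is nowhere zero; in the latter case $\xi\equiv0$, and then \eqref{eq:DZ.standard} gives $0=\nabla_k\Xi_{ij}=\tfrac13 S_{ijk}\lVert S\rVert^2$, i.e.\ $S=0$ after all. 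Either way $S=0$, hence $R=0$, $\Xi=0$ and $\bigtau=0$ by \eqref{eq:tau.standard}, and integrating the remaining equations yields the harmonic oscillator class; this is exactly the paper's argument. Your worry about conformal weights is legitimate but minor: in the conventions of Section~\ref{sec:local.conf.equations} the standard-gauge constraint reads $0=\tfrac43|s|^2+\tfrac38\phi^2R$, without the extra factor $\phi^4$ you wrote, but the two facts you actually need --- that $\lVert S\rVert^2$ is a nonnegative multiple of $|s|^2$ and that it is constant when $R$ is --- survive the correction.
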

\begin{proof}
	We prove the claim for $R=0$ first. Then it follows from~\eqref{eq:Del.t.standard} that $S=0$, and thus $\Xi=0$ because of~\eqref{eq:DZ.standard}. Moreover, from~\eqref{eq:tau.standard} we conclude $\bigtau=0$. Integrating the resulting equations, in concrete local coordinates, we find the harmonic oscillator system.
	
	Now, if $\nabla R=0$, it follows that $S_{kab}\Xi^{ab}=0$. In local coordinates~\eqref{eqn:local.coordinates}, the components of this condition are $s\bar\xi=0$ and $\bar s\xi=0$. Hence, either $s=0$ or $\xi=0$ and therefore either	$S=0$ or $\Xi=0$. If $\Xi=0$, then it follows from~\eqref{eq:DZ.standard} that $S=0$. It follows that $R=0$ and so the claim follows as above.
\end{proof}

\subsection{Flat gauge}\label{sec:flat.conformal}
We now reconsider the integrability conditions for the compatible potentials and (trace-free) conformal Killing tensors of a non-degenerate system assuming that the system is in flat gauge. Choosing a flat gauge is always possible, for non-degenerate systems in dimension~2, due to the existence of Gau{\ss}' isothermal coordinates.
Applying such a suitable conformal transformation, we therefore may assume the underlying metric of a system to be flat, i.e.~$R=0$.  Recalling the discussion in Section~\ref{sec:local.conf.equations}, we have $\phi=1$ in flat gauge and 
\begin{align*}
	T_\text{flat} &= s\,dz^3 + t_1\,dz^2\otimes dw + t_2\,d\bar z^2\otimes dz + \bar sd\bar z^3
	\\
	S_\text{flat} &= s\,dz^3 + \bar sd\bar z^3
	\\
	\Xi &= \xi dz^2+\bar\xi d\bar z^2\,.
\end{align*}
Using the results from Sections~\ref{sec:2D.systems} and~\ref{sec:local.conf.equations}, and reconsidering~\eqref{eq:SICC:q} and~\eqref{eq:SICC:gamma}, we arrive at the following theorem.
\begin{proposition}\label{prop:structural.equations.conformal.flat}~ 
	For a system in flat gauge the structure tensors satisfy the prolongation equations
	\begin{align}
		\diff{s}{z} &= \frac43\diff{t}{z}s
		\label{eq:Sz.flat} 
		\\
		\diff{s}{\bar z} &= \frac12\xi-\frac23\diff{t}{\bar z}s
		\label{eq:Sw.flat}
		\\
		\diff{\xi}{z} &= \frac{16}{3}s^2\bar s+\frac43\xi\diff{t}{z}
		\label{eq:Xiz.flat} 
		\\
		\diff{\xi}{\bar z} &= \frac83\,s\bar\xi
		\label{eq:Xiw.flat}
		\\
		\frac{\del^2t}{\del z^2} &= \frac43\left(\diff{t}{z}\right)^2
									+2\diff{t}{\bar z}s
									+\frac12\left(3\smalltau-\xi\right)
		\label{eq:hess.t.flat}
		\\
		\frac{\del^2t}{\del z\del\bar z} &= \frac43s\bar s
		\label{eq:Del.t.flat}
		\\
		\intertext{and}
		\diff{\smalltau}{\bar z}
		&= 2\left(
		-\frac{20}{9}s\bar s\diff{t}{z}
		-\frac49s\left(\diff{t}{\bar z}\right)^2
		-\frac13\xi\diff{t}{\bar z}
		+s(\bar\xi-\overline\smalltau)
		\right)
		\label{eq:aleph.w.flat}
	\end{align}
	and their complex conjugates, where $\smalltau$ is an undetermined (complex) function.
\end{proposition}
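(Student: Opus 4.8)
The plan is to translate the gauge-independent structural equations of Proposition~\ref{prop:structural.equations.conformal.standard} and the local equations of Section~\ref{sec:local.conf.equations} into the flat gauge, where $\phi=1$, $R=0$, and the primary structure tensor acquires the nonzero trace components $t_1=\del t/\del z$, $t_2=\del t/\del \bar z$. The key observation is that flat gauge and standard gauge are related by the conformal transformation with $\Upsilon=\ln|\Omega|=\tfrac13 u$, so I would either push the standard-gauge equations through Lemma~\ref{la:conformal.trafo}, or---more directly---simply specialise the already-derived local coordinate equations. Since several of the target equations are essentially restatements of formulas proved earlier, I expect most of them to follow by substitution rather than by fresh computation.

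Concretely, I would proceed equation by equation. Equations~\eqref{eq:Sz.flat} and~\eqref{eq:Sw.flat} come from the conformal Killing-type prolongation~\eqref{eq:DS.local.gen}, rewritten for $S$ with $\phi=1$; indeed~\eqref{eq:DS.local.gen} gives $\del s_1/\del z = \tfrac43 (\del t/\del z) s_1$ upon setting $\phi=1$, which is exactly~\eqref{eq:Sz.flat}, while~\eqref{eq:Sw.flat} encodes the remaining component of $\nabla S$ together with the definition $\Xi_{ij}=(\nabla^a+\tfrac23 t^a)S_{ija}$ specialised to flat gauge. Equation~\eqref{eq:Del.t.flat} is the flat-gauge instance ($\phi=1$, $R=0$) of~\eqref{eq:Del.t.gen}. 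Equation~\eqref{eq:hess.t.flat} is the $V_{,zz}$-type integrability relation expressing $\del^2 t/\del z^2$; I would obtain it from~\eqref{eq:aleph.eqns.1} solved for the Hessian of $t$, with $\phi=1$ inserted, which links $\del^2 t/\del z^2$ to $\smalltau$, $\xi$, and the first derivatives of $t$. The two equations~\eqref{eq:Xiz.flat} and~\eqref{eq:Xiw.flat} for the divergence-type tensor $\Xi$ should descend from~\eqref{eq:DZ.standard}: I would transform $\nabla_k\Xi_{ij}$ to flat gauge using $\hat\nabla=\nabla-2d\Upsilon$ and the identity~\eqref{eq:S.identity} for $S^{abc}S_{abc}=8 s\bar s$ in these coordinates, reading off the $dz$ and $d\bar z$ components.

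The main obstacle will be equation~\eqref{eq:aleph.w.flat}, the prolongation of the undetermined function $\smalltau$. Here $\smalltau$ is a genuine unknown (not determined algebraically, since in flat gauge $t\ne 0$ and $\bigtau$ no longer reduces to $-\tfrac13\Xi$), so I must derive its $\bar z$-derivative from the remaining integrability condition---namely~\eqref{eq:SICC:gamma}, equivalently~\eqref{eq:SICC:V:gamma.gen}, which involves $\gamma_{i,j}$, $q\indices{_i^m}\bigtau_{mj}$, and $t_i\gamma_j$. I would substitute the flat-gauge expressions for $q_{ab}$, $\gamma_a$, and $\bigtau_{ab}$ from the end of Section~\ref{sec:local.conf.equations} (recalling $\gamma_1=s\overline\smalltau+\del\smalltau/\del\bar z$ and the like), then eliminate the Hessian terms of $t$ using~\eqref{eq:hess.t.flat} and~\eqref{eq:Del.t.flat}, and finally use~\eqref{eq:Sz.flat}--\eqref{eq:Xiw.flat} to remove derivatives of $s$ and $\xi$. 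This is the calculation where careful bookkeeping of the $\phi=1$, $R=0$ reductions and the antisymmetrisation $\young(i,j)$ matters most; the remaining steps, by contrast, are routine verifications that the closed system is consistent with the integrability conditions~\eqref{eq:SICC} and with the Ricci identities for the $S$- and $\Xi$-equations, which I expect to hold by the same direct computations already invoked in the proof of Proposition~\ref{prop:structural.equations.conformal.standard}.
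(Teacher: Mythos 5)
Most of your plan matches the paper's proof: \eqref{eq:Sz.flat}, \eqref{eq:Sw.flat}, \eqref{eq:hess.t.flat} and \eqref{eq:Del.t.flat} are indeed read off from the local formulas of Section~\ref{sec:local.conf.equations} with $\phi=1$, and your identification of \eqref{eq:Sw.flat} as the definition \eqref{eq:Xi} written in components is correct. Your route to \eqref{eq:Xiz.flat} and \eqref{eq:Xiw.flat} -- pushing \eqref{eq:DZ.standard} through the conformal transformation with $\Upsilon=\tfrac13 t$, using the conformal invariance of $\Xi$ -- differs from the paper, which instead obtains \eqref{eq:Xiz.flat} by cross-differentiating \eqref{eq:Sz.flat} and \eqref{eq:Sw.flat} (using \eqref{eq:Del.t.flat}) and \eqref{eq:Xiw.flat} from \eqref{eq:SICC:q}; but your route is legitimate and is essentially how the paper later derives \eqref{eq:DXi.general} in Proposition~\ref{prop:structural.equations.conformal}.

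The genuine gap is in your derivation of \eqref{eq:aleph.w.flat}. You propose to extract $\del\smalltau/\del\bar z$ from \eqref{eq:SICC:gamma}, i.e.\ \eqref{eq:SICC:V:gamma.gen}. But in local coordinates $\gamma_1=s\,\overline\smalltau+\del\smalltau/\del\bar z$ already contains a first derivative of $\smalltau$, so the term $\young(i,j)\gamma_{i,j}$ in \eqref{eq:SICC:gamma} is \emph{second} order in $\smalltau$. That condition is therefore a differential consequence of the sought first-order prolongation equation, not a source from which \eqref{eq:aleph.w.flat} can be read off; you cannot integrate it down to first order without further input. The correct (and much shorter) derivation is the one the paper uses: impose the symmetry of the third derivatives of $t$, i.e.\ equate $\del_{\bar z}$ of \eqref{eq:hess.t.flat} with $\del_z$ of \eqref{eq:Del.t.flat}, then eliminate $\del^2t/\del\bar z^2$, $\del s/\del\bar z$, $\del\bar s/\del z$ and $\del\xi/\del\bar z$ using \eqref{eq:hess.t.flat}, \eqref{eq:Sw.flat}, \eqref{eq:Sz.flat} and \eqref{eq:Xiw.flat} (and their conjugates), and solve the resulting relation for $\del\smalltau/\del\bar z$; this yields exactly \eqref{eq:aleph.w.flat}. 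With that replacement your argument closes; note also that no equation for $\del\smalltau/\del z$ is obtained this way, which is why $\smalltau$ remains undetermined and the flat-gauge prolongation fails to be of finite type.
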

\begin{proof}
	The proof is similar to the corresponding statement for standard gauge.
	Equations~\eqref{eq:Sz.flat}, \eqref{eq:Sw.flat}, \eqref{eq:hess.t.flat} and~\eqref{eq:Del.t.flat} are obtained from the discussion in Section~\ref{sec:local.conf.equations}.
	Equation~\eqref{eq:Xiz.flat} is obtained from Equations~\eqref{eq:Sz.flat}, \eqref{eq:Sw.flat} and~\eqref{eq:Del.t.flat} together with the symmetry of second derivatives.
	From~\eqref{eq:SICC:q}, we obtain~\eqref{eq:Xiw.flat}.
	Differentiating Equations~\eqref{eq:hess.t.flat} and~\eqref{eq:Del.t.flat}, and using the symmetry of second derivatives, we then find~\eqref{eq:aleph.w.flat}.
	The integrability conditions for~\eqref{eq:Xiz.flat} and~\eqref{eq:Xiw.flat}, as well as~\eqref{eq:SICC:q} are then already satisfied.
\end{proof}

\noindent We observe that the prolongation in Proposition~\ref{prop:structural.equations.conformal.flat} is not of finite type and $\diff{\smalltau}{z}$ cannot be expressed in terms of lower derivatives. However, we will see that in the case of properly superintegrable systems the prolongation is indeed of finite type, i.e.~we obtain a closed system. This will be presented thoroughly in Proposition~\ref{prop:SIC:K.NLP}, but is already apparent by setting $\smalltau=0$ in Proposition~\ref{prop:structural.equations.conformal.flat}.
\begin{proposition}\label{prop:holomorphic.A}
	Let $D=e^{-\frac43t}$. Then $\beta:=Ds$ is anti-holomorphic,
	$$ \beta=\beta(\bar z). $$
\end{proposition}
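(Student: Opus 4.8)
The plan is to verify directly that $\beta = Ds$ satisfies $\partial\beta/\partial z = 0$, which is precisely the condition for $\beta$ to be anti-holomorphic when $w = \bar z$. First I would compute the $z$-derivative of the product $\beta = Ds = e^{-\frac43 t}s$ using the product rule:
\[
	\diff{\beta}{z} = e^{-\frac43 t}\left( -\frac43\diff{t}{z}\,s + \diff{s}{z} \right).
\]
The key observation is that the bracketed expression is exactly zero by virtue of the prolongation equation~\eqref{eq:Sz.flat}, which states $\diff{s}{z} = \frac43\diff{t}{z}s$. Substituting this in immediately yields $\diff{\beta}{z} = 0$, so $\beta$ depends only on $\bar z$, i.e.\ $\beta = \beta(\bar z)$.

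This is essentially a one-line computation once the right integrating factor is identified, and the whole content of the proposition is that $D = e^{-\frac43 t}$ is precisely the integrating factor that turns the linear first-order equation~\eqref{eq:Sz.flat} into an exact statement of holomorphicity. I would therefore present the proof as a direct differentiation together with a single appeal to~\eqref{eq:Sz.flat}. One should note the consistency with the earlier local computation: in flat gauge $\phi = 1$, so equation~\eqref{eq:beta} specialises to $s = \beta_1(\bar z)\exp(\frac43 t)$ (with $s_1 = s$ and $w = \bar z$), which upon multiplication by $D = e^{-\frac43 t}$ gives $Ds = \beta_1(\bar z)$, recovering the same conclusion and identifying $\beta$ with the function $\beta_1$ from Section~\ref{sec:local.conf.equations}.

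There is no real obstacle here; the only point requiring care is bookkeeping of conventions. In particular one must confirm that in the flat-gauge setting the relevant component equation is indeed~\eqref{eq:Sz.flat} with the coefficient $\frac43$ (rather than, say, the $-\frac23$ appearing in the complex-conjugate partner~\eqref{eq:Sw.flat}), and that the sign in the exponent of $D$ matches so that the cross term cancels rather than doubles. Since $w = \bar z$ has been fixed from the start of Section~\ref{sec:riemann.surface}, ``anti-holomorphic'' means annihilated by $\partial/\partial z$, so verifying $\diff{\beta}{z} = 0$ is exactly the required statement, and no further integrability argument is needed.
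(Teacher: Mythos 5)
Your proof is correct and follows essentially the same route as the paper, which simply notes that $\partial\beta/\partial z=0$ follows from the flat-gauge prolongation equations; your explicit product-rule computation shows that only~\eqref{eq:Sz.flat} is actually needed for this step (the paper's additional citation of~\eqref{eq:hess.t.flat} enters only in the subsequent computation of $\partial\beta/\partial\bar z$). The consistency check against~\eqref{eq:beta} is a nice touch but not required.
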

\noindent In other words, $\bar\beta$ is a holomorphic function.
\begin{proof}
	From~\eqref{eq:Sz.flat} and~\eqref{eq:hess.t.flat}, we infer $\diff{\beta}{z}=0$.
\end{proof}
Note that from~\eqref{eq:Sw.flat} together with~\eqref{eq:hess.t.flat}, we infer
\begin{align*}
	\diff{\beta}{\bar z}
	&= \frac34\frac{\del^2D}{\del z^2}	-\frac32D\smalltau\,.
\end{align*}
This generalises an observation in~\cite{Kalnins&Kress&Miller} and~\cite{Kress&Schoebel} for proper systems, i.e.~for the special case $\smalltau=0$ in our notation. In this case
$$ \diff{\beta}{\bar z} = \frac34\frac{\del^2 D}{\del z^2} $$
and therefore
$$ \frac{\del^3D}{\del z^3} = 0\quad\text{and}\quad \frac{\del^3D}{\del\bar z^3} = 0\,, $$
since $D$ is real.
This result is crucial in~\cite{Kress&Schoebel} for the characterisation of $D$ as a polynomial on the base manifold.

\subsection{Formulas for arbitrary conformal scale}\label{sec:general.conformal}

Having computed the non-linear prolongation and the integrability conditions in the standard and flat gauge, it remains for us to obtain the respective equations for arbitrary gauge.
These are determined in the current section.
Instead of following the same strategy as in the specific gauge choices discussed earlier, we will exploit that the general formulas have to be conformally equivariant conditions.
We begin with the general formula for~$\bigtau$.

\begin{lemma}\label{la:hess.t}
	For a 2-dimensional non-degenerate second order conformally superintegrable system, we have
	\begin{equation}\label{eq:tau.gen.transformed}
		\bigtau_{ij} =
		{\young(ij)}_\circ\left(\frac13\nabla_jt_i-\frac49t_it_j\right)
		- \frac23 S_{ija}t^a
		- \frac13\,\Xi_{ij}
	\end{equation}
	where $\Xi$ is defined as in~\eqref{eq:Xi}.
\end{lemma}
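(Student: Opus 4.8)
The section announces that the general-scale formula is to be obtained ``by conformal equivariance,'' so the plan is to bootstrap from the standard gauge, where the answer is already available, rather than to recompute from scratch. In standard gauge the primary structure tensor is trace-free and one may take $t=0$, and Proposition~\ref{prop:structural.equations.conformal.standard} then gives $\bigtau_{ij}=-\tfrac13\Xi_{ij}$ (equation~\eqref{eq:tau.standard}). Since $\Xi_{ij}$ from~\eqref{eq:Xi} is conformally invariant by the preceding lemma, the whole task reduces to transporting this one identity to an arbitrary scale by means of the transformation law~\eqref{eqn:trafo.StTau} for $\bigtau$.

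Concretely, I would fix the conformal factor $\Omega=e^{t/3}$, so that $\Upsilon:=\ln|\Omega|=\tfrac13 t$ and the rescaled system has $\hat t = t-3\Upsilon=0$, i.e.\ lands in standard gauge. Denoting the image of $\bigtau$ under~\eqref{eqn:trafo.StTau} by $\hat\bigtau_{ij}$ and using $\hat\bigtau_{ij}=-\tfrac13\hat\Xi_{ij}=-\tfrac13\Xi_{ij}$ (invariance of $\Xi$), I would invert that transformation law to obtain
\[
	\bigtau_{ij} = -\tfrac13\Xi_{ij} - 2\,T_{ija}\Upsilon^a
	+ {\young(ij)}_\circ\!\left(\nabla^2_{ij}\Upsilon + 2\Upsilon_i\Upsilon_j\right).
\]
Then I would substitute $\Upsilon=\tfrac13 t$ (so $\Upsilon_i=\tfrac13 t_i$ and $\nabla^2_{ij}\Upsilon=\tfrac13\nabla_jt_i$) and eliminate $T$ via its decomposition~\eqref{eq:SICC:T}, $T_{ijk}=S_{ijk}+{\young(ij)}_\circ(t_ig_{jk})$. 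In dimension $n=2$ the contraction expands as $T_{ija}t^a = S_{ija}t^a + 2t_it_j - g_{ij}\,t_at^a$; collecting this with the Hessian term and with $\Upsilon_i\Upsilon_j=\tfrac19 t_it_j$ should reassemble exactly ${\young(ij)}_\circ\!\left(\tfrac13\nabla_jt_i-\tfrac49 t_it_j\right)-\tfrac23 S_{ija}t^a-\tfrac13\Xi_{ij}$, which is~\eqref{eq:tau.gen.transformed}.

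The step demanding the most care is the bookkeeping of the trace removals, because the Young symmetrisers are un-normalised: acting on an already symmetric $(0,2)$-tensor $A_{ij}$, the operator ${\young(ij)}_\circ$ returns $2A_{ij}-\tfrac2n g_{ij}\tr A$. One must therefore track the induced $g_{ij}\Delta t$ and $g_{ij}\,t_at^a$ contributions from the $\Upsilon$-Hessian term and verify that they combine with the $g_{ij}\,t_at^a$ piece coming from $T_{ija}t^a$ to leave a manifestly trace-free right-hand side, as required by $\bigtau\indices{^a_a}=0$. As a consistency check I would also re-express~\eqref{eq:SICC:tau} by trading the divergence $\nabla^aS_{ija}$ for $\Xi_{ij}-\tfrac23 S_{ija}t^a$; this exhibits the same trace cancellations, though one has to be attentive to the signs in reconciling the two routes.
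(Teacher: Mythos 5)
Your proposal is correct and takes essentially the same route as the paper: pass to standard gauge by rescaling with $\Omega=e^{\pm t/3}$, invoke $\hat{\bigtau}_{ij}=-\tfrac13\hat{\Xi}_{ij}$ from Proposition~\ref{prop:structural.equations.conformal.standard} together with the conformal invariance of $\Xi$, invert the transformation law~\eqref{eqn:trafo.StTau}, and eliminate $T$ via the decomposition~\eqref{eq:SICC:T}. (Your choice $\Upsilon=+\tfrac13 t$ is the one consistent with the stated rule $t\mapsto t-3\Upsilon$; the paper's proof writes $\ln\Omega=-\tfrac13 t$, but the resulting identity is the same.)
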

\begin{proof}
	According to the transformation formulas in Section~\ref{sec:conformal.trafos}, and with $\ln\Omega=-\tfrac13t$, we obtain
	\[
		\bigtau_{ij}+\frac13\Xi_{ij}\longmapsto
		\bigtau_{ij} + \frac23 S_{ijk}t^k-{\young(ij)}_\circ\left(\frac13\nabla_jt_i-\frac49t_it_j\right) 
		+\frac13\Xi_{ij}
	\]
	The claim then immediately follows from~\eqref{eq:tau.standard}, since we obtain
	\[
		0 = \bigtau_{ij} + \frac23 S_{ija}t^a -{\young(ij)}_\circ\left(\frac13\nabla_jt_i-\frac49t_it_j\right) +\frac13\,\Xi_{ij}\,,
	\]
	which is equivalent to the asserted formula.
\end{proof}

Next, we obtain the general form of~\eqref{eq:Del.t.standard} and~\eqref{eq:Del.t.flat}.
\begin{remark}
	For a 2-dimensional non-degenerate second order conformally superintegrable system, we have already found that
	\begin{align}\label{eq:Del.t.general}
		\Delta t-\frac32R &= \frac13\,S^{abc}S_{abc},
	\end{align}
	i.e.~\eqref{eq:SICC:Delta.t}.
	The same formula is found if we transform \eqref{eq:Del.t.standard} using a conformal transformation with $t=-\frac13\ln|\Omega|$. Indeed,
	$$ R+\frac29\,S^{abc}S_{abc}\longrightarrow\Omega^{-2}(R-\frac23\Delta t+\frac29S^{abc}S_{abc}). $$
\end{remark}
Lemma~\ref{la:hess.t} and this remark together express second covariant derivatives of $t$ in terms of $S$, $\Xi$, $\nabla t$, $\bigtau$ and $R$.

\begin{proposition}\label{prop:structural.equations.conformal}
	For a 2-dimensional non-degenerate second order conformally superintegrable system, we have the prolongation
	\begin{align}
		\nabla_lS_{ijk}
		&=  \frac16\,{\young(ijk)}_\circ\left[
				-\frac23S_{ijk}t_l+2t_iS_{jkl}
				+\Xi_{ij}g_{kl}
			\right]
		\label{eq:DS.general}
		\\
		\nabla_k\Xi_{ij}
		&= \frac13S_{ijk}\,\,S^{abc}S_{abc}
				-\frac29\young(ijk)\left(
					\Xi_{jk}t_i-\frac12g_{ij}\Xi_{ka}t^a
				\right)
				+\frac23{\young(ij)}_\circ g_{jk}S_{iab}\Xi^{ab}
			\label{eq:DXi.general}
		\\
		\nabla^a\bigtau_{ak}
		&= -S_{kab}\bigtau^{ab}
		+S_{kab}\Xi^{ab} -\frac23\,\Xi_{ka}t^a - \frac49\,S_{kab}t^at^b - \frac59\,S_{abc}S^{abc}t_k
		+\frac12\nabla_kR- Rt_k
		\label{eq:divTau.general}
		\\
		\nabla^2_{ij}t
		&= \frac32\left[
				\bigtau_{ij}+\frac23S_{ija}t^a+\frac49{\young(ij)}_\circ t_it_j+\frac13\Xi_{ij}
			\right]
			+\frac12g_{ij}\left[ \frac13S^{abc}S_{abc}+\frac32R \right]
			\label{eq:DDt.general}
	\end{align}
	where $\Xi$ is the conformal invariant~\eqref{eq:Xi}.
\end{proposition}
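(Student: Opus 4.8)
My plan is to exploit that all four identities are tensorial relations which must transform covariantly under conformal rescaling, and that each is already established in standard gauge by Proposition~\ref{prop:structural.equations.conformal.standard}, where $t=0$. I would therefore derive every general-gauge formula by pushing its standard-gauge counterpart through the conformal rescaling $\Omega=e^{-t/3}$ (so $\Upsilon=\ln|\Omega|=-\tfrac13t$), exactly the transformation used in the proof of Lemma~\ref{la:hess.t}, which carries a standard-gauge system to the general-gauge system with structure function $t$. The required transformation laws are those collected in Section~\ref{sec:conformal.trafos}: the rescaling $S_{ijk}\mapsto\Omega^2S_{ijk}$ and $t\mapsto t-3\Upsilon$ from~\eqref{eqn:trafo.StTau}, the conformal invariance of the tensor $\Xi$ from~\eqref{eq:Xi}, the scalar-curvature law $R\mapsto\Omega^{-2}(R-2\Delta\ln|\Omega|)$, and the change of Levi-Civita connection $\hat\nabla_i\omega_j=\nabla_i\omega_j-C\indices{^a_{ij}}\omega_a$ with difference tensor $C\indices{^a_{ij}}=\delta^a_i\Upsilon_j+\delta^a_j\Upsilon_i-g_{ij}\Upsilon^a$ (one such correction per tensor slot). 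Throughout, the dimension-two identities~\eqref{eq:curvature.tensor}, \eqref{eq:S.identity}, \eqref{eq:Z.identity} and~\eqref{eq:trivial.identity.hookZ} are what collapse the resulting contractions to the stated coefficients.

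I would dispatch~\eqref{eq:DDt.general} first, as it needs no transformation. Lemma~\ref{la:hess.t} already supplies the trace-free symmetric part of the Hessian: solving~\eqref{eq:tau.gen.transformed} for ${\young(ij)}_\circ\nabla_jt_i$, and using that in dimension two ${\young(ij)}_\circ H_{ij}=2\bigl(H_{ij}-\tfrac12g_{ij}H\indices{^a_a}\bigr)$ for a symmetric $H$, gives precisely $\tfrac32$ times the bracket in~\eqref{eq:DDt.general}. Restoring the trace $\tfrac12g_{ij}\Delta t$ and substituting $\Delta t=\tfrac13S^{abc}S_{abc}+\tfrac32R$ from~\eqref{eq:Del.t.general} reproduces~\eqref{eq:DDt.general} verbatim.

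The equations~\eqref{eq:DS.general} and~\eqref{eq:DXi.general} form the computational core, and I would obtain them by transforming~\eqref{eq:DS.standard} and~\eqref{eq:DZ.standard}. Writing the standard-gauge $\hat\nabla_lS_{ijk}$ in terms of the general-gauge $\nabla_lS_{ijk}$, the rescaling $S\mapsto\Omega^2S$ contributes a term $\propto t_lS_{ijk}$ through $\nabla_l\Omega^2\propto\Upsilon_l$, while the three connection corrections $C\indices{^a_{li}}$ (one per index) contribute the $t_iS_{jkl}$-type contractions and the $g_{li}t^aS_{ajk}$ trace pieces; combined with the invariance of $\Xi$ and the $\Omega^2$-scaling of $g_{kl}$ in the source term, these reassemble, after imposing the symmetry ${\young(ijk)}_\circ$, into the right-hand side of~\eqref{eq:DS.general}. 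The same bookkeeping on~\eqref{eq:DZ.standard} — now with $\Xi$ invariant but $\nabla\Xi$ not, with $S^{abc}S_{abc}$ scaling as $\Omega^{-4}$, and with~\eqref{eq:S.identity} collapsing the quartic $S$-terms — yields~\eqref{eq:DXi.general}. The main care here is the nonstandard (unnormalised) symmetrisation conventions encoded by the Young tableaux and the separate treatment of the traces, which are trivial in $i,j,k$ but not against the derivative index.

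The divergence identity~\eqref{eq:divTau.general} is the main obstacle. I would derive it by applying $\nabla^a$ directly to the $\bigtau$-formula~\eqref{eq:tau.gen.transformed} of Lemma~\ref{la:hess.t}, substituting~\eqref{eq:DS.general} and~\eqref{eq:DXi.general} for $\nabla^aS_{ija}$ and $\nabla^a\Xi_{ak}$, and using the third-order information from~\eqref{eq:DDt.general} together with~\eqref{eq:Del.t.general} for the terms $\nabla^a\nabla_a\nabla_kt$. The delicate point is that commuting $\nabla^a\nabla_kt_a$ forces a Ricci identity, and it is exactly this curvature commutator — evaluated via~\eqref{eq:curvature.tensor} — together with the $R$-dependence of $\Delta t$ that produces the $\tfrac12\nabla_kR-Rt_k$ contributions. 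A cross-check would be to instead transform the standard-gauge divergence $\nabla^a\bigtau_{ak}=-\tfrac13\nabla^a\Xi_{ak}$ obtained from~\eqref{eq:tau.standard} and the trace of~\eqref{eq:DZ.standard}, importing the curvature relations~\eqref{eq:Del.t.standard} and~\eqref{eq:DR.standard} and the $\Delta\ln|\Omega|$ term of the curvature law. Either way the difficulty is purely the bookkeeping of the many $t$-weighted terms and the accurate tracking of the curvature contributions; the identities~\eqref{eq:Z.identity} and~\eqref{eq:trivial.identity.hookZ}, which reduce the contractions $S_{kab}\Xi^{ab}$, $S_{kab}t^at^b$ and $S_{abc}S^{abc}t_k$ to their final coefficients, are what make the collapse to~\eqref{eq:divTau.general} possible.
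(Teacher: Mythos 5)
Your proposal is correct and follows essentially the same route as the paper: the general-gauge identities are obtained by pushing the standard-gauge equations of Proposition~\ref{prop:structural.equations.conformal.standard} through the conformal rescaling with $\Upsilon=-\tfrac13t$ using the transformation laws of Section~\ref{sec:conformal.trafos}, and \eqref{eq:DDt.general} is read off from Lemma~\ref{la:hess.t} combined with \eqref{eq:Del.t.general}. Your derivation of \eqref{eq:divTau.general} by differentiating \eqref{eq:tau.gen.transformed} directly is the ``direct computation'' alternative the paper itself mentions, so nothing is missing.
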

\begin{proof}
	We have, with $\Upsilon_i=-\frac13t_i$,
	\begin{align*}
		\nabla_lS_{ijk}
		&\to \Omega^2\nabla_lS_{ijk}+\Omega^2\young(ijkl)S_{ijk}\Upsilon_l
		\\
		&\quad
		-\Omega^2\young(ijk)(S_{ijk}\Upsilon_l+S_{ija}\Upsilon^ag_{kl})
	\end{align*}
	and
	$$ {\young(ijk)} \left(
		\Xi_{ij}g_{kl}-\frac12\,g_{ij}\Xi_{kl}
	\right)
	\to \Omega^2{\young(ijk)} \left(
		\Xi_{ij}g_{kl}-\frac12\,g_{ij}\Xi_{kl}
	\right)\,, $$
	from which we obtain
	\begin{align*}
		\nabla_lS_{ijk}
		&=  \frac16\,\young(ijk)\left[
			-\frac23S_{ijk}t_l+2\left(t_iS_{jkl}-\frac12g_{ij}S_{kla}t^a\right)
			+\left( \Xi_{ij}g_{kl}-\frac12\,g_{ij}\Xi_{kl} \right)
			\right].
	\end{align*}
	Equation~\eqref{eq:DS.general} follows immediately.
	A similar computation yields~\eqref{eq:DXi.general} and~\eqref{eq:divTau.general}.
	Alternatively, \eqref{eq:DXi.general} is obtained from a direct computation, analogous to the corresponding computations in the proofs of Propositions~\ref{prop:structural.equations.conformal.standard} and~\ref{prop:structural.equations.conformal.flat}, subsequently substituting 
	$$\div(\bigtau)=2\phi^{-2}\left(\diff{\smalltau}{\bar z}dz+\diff{\overline\smalltau}{z}d\bar z\right).$$
	A similar direct computation yields~\eqref{eq:divTau.general}.
	Equation~\eqref{eq:DDt.general} is a combination of~\eqref{eq:tau.gen.transformed} and~\eqref{eq:Del.t.general}.
\end{proof}

\noindent Here we observe another fundamental structural difference of 2-dimensional non-degenerate systems, compared to abundant systems in dimension $n\geq3$: while in higher dimensions all first derivatives of the structure tensor $T$ are expressed in terms of $T$ and the curvature, this is not possible in dimension two.
Specifically, the divergence $\mathsf Z_{ij}=\nabla^aS_{ija}$ remains unspecified. However, for the derivative of this divergence, there are, again, expressions involving only $T$, the curvature and $\mathsf Z$, yielding a closed system again.
The above computations have confirmed the following:

\begin{lemma}\label{la:conformally.invariant.structure.functions}
	Consider a second order superintegrable system on a Riemann surface.
	Let $g=e^{2f}dzd\bar z$ be a local realisation of the corresponding conformal class $[g]$, and compute the functions
	$s$, $\xi$ and $t$ as above.
	Let $g'=e^{2f'}dzd\bar z$ be another local realisation of $[g]$, and compute $s'$, $\xi'$ and $t'$ analogously.
	Then
	$$ s'=s\,,\qquad \xi'=\xi \qquad\text{and}\qquad \frac13t'+f'=\frac13t+f+c $$
	where $c\in\RR$.
\end{lemma}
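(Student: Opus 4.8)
The plan is to observe that the two realisations differ only by the conformal rescaling $g'=\Omega^2 g$ with $\Omega=e^{f'-f}$, hence $\Upsilon:=\ln|\Omega|=f'-f$, and then to extract each of the three assertions from the transformation laws~\eqref{eqn:trafo.StTau} of Lemma~\ref{la:conformal.trafo} together with the conformal invariance of the tensor $\Xi$ defined in~\eqref{eq:Xi}. No overdetermined integrability argument is required here: the content lies entirely in the already-established transformation behaviour of $S$, $\Xi$ and $t$ under conformal rescaling.

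First I would treat $s$. Because the conformal factor $\phi^2=e^{2f}$ is divided out in its definition, we have $s=e^{-2f}S_{zzz}$, where $S_{zzz}$ denotes the coefficient of $dz^3$, and likewise $s'=e^{-2f'}S'_{zzz}$ for the primed realisation. Since the rescaling acts by $S_{ijk}\mapsto\Omega^2 S_{ijk}$, in the fixed coordinates $(z,\bar z)$ the relevant component satisfies $S'_{zzz}=e^{2(f'-f)}S_{zzz}$. Substituting gives $s'=e^{-2f'}e^{2(f'-f)}S_{zzz}=e^{-2f}S_{zzz}=s$; that is, the factor $e^{-2f'}$ in the definition of $s'$ exactly cancels the weight-$2$ scaling of $S$.

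For $\xi$ I would simply invoke that $\Xi$ is conformally invariant, so that its components in the fixed coordinates $(z,\bar z)$ are literally unchanged and hence $\xi'=\xi$ (note that, unlike $s$, here no compensating power of $\phi$ is needed, precisely because $\Xi_{ij}$ is already invariant as a $(0,2)$-tensor). For $t$ I would use $t\mapsto t-3\Upsilon=t-3(f'-f)$ and pass to differentials: $d(\tfrac13 t'+f')=\tfrac13\,dt-d(f'-f)+df'=\tfrac13\,dt+df=d(\tfrac13 t+f)$, so the two functions $\tfrac13 t'+f'$ and $\tfrac13 t+f$ have the same differential and therefore differ by a constant $c\in\RR$.

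The only delicate points are bookkeeping ones. One must fix the correct sign and normalisation — namely $\Upsilon=f'-f$ and the weight $2$ of $S$ — so that the cancellations come out as claimed. More importantly, the constant $c$ is genuinely unavoidable, and this is exactly why the statement is phrased with $c\in\RR$ rather than as a strict equality: the structural equations determine $t$ (and $t'$) only through $dt$, i.e.\ only up to an additive constant. Passing to differentials in the last step is therefore not merely convenient but necessary.
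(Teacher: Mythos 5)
Your proposal is correct and follows essentially the same route as the paper: both rest entirely on the transformation laws of Lemma~\ref{la:conformal.trafo} and the conformal invariance of $\Xi$, with the invariance of $s$ coming from the cancellation of the weight-$2$ scaling of $S$ against the factor $\phi^{-2}$ in its definition. The only cosmetic difference is in the last claim, where the paper phrases the invariance of $u=f+\tfrac13 t$ (up to a constant) via the uniqueness of the standard gauge, whereas you obtain it directly from $t\mapsto t-3\Upsilon$ by passing to differentials — the two arguments are equivalent.
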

\noindent Note that the constant $c\in\RR$ is an irrelevant as $t$ is only defined up to a constant.
\begin{proof}
	The conformal invariance of $s,\bar s$ and $\xi,\bar\xi$ is granted by construction.
	The standard scale system corresponding to the given realisation is obtained by a rescaling $\Omega$ with $\Upsilon=\ln|\Omega|$ and
	$$ t=3\Upsilon, $$
	which is defined up to addition of a constant.
	Thus with $\Omega=e^{\frac13t}$ we have
	$$ g\to e^{\frac23t}e^{2f}dzd\bar z $$
	and conclude that $u:=f+\frac13t$ characterises the standard scale. Since the standard scale is unique up to adding a constant to $u$, this is unchanged by a conformal rescaling.
\end{proof}

\noindent Because of Lemma~\ref{la:conformally.invariant.structure.functions} we interpret $s$, $\xi$ and $u:=f+\frac13t$ as conformally invariant \emph{structure functions} of the system.

\section{Proper superintegability}\label{sec:proper}

We now consider a special case of the previous discussion, namely when 
$\bigtau_{ij}=0$. This is the case of properly superintegrable systems.

\subsection{Preliminaries}\label{sec:proper.preliminaries}

We recall the definition and properties of proper second order superintegrable systems. A result from~\cite{KSV2023} then allows us to obtain our results for proper systems directly from those in Section~\ref{sec:conformal.systems} by setting $\tau=0$.

\begin{definition}
	\begin{enumerate}
		\item
		A \emph{maximally superintegrable system} in dimension two is a Hamiltonian system admitting $2n-1=3$ functionally independent constants of motion
		$F^{(\alpha)}$,
		\begin{align}
			\label{eq:integral}
			\{F^{(\alpha)},H\}&=0&
			\alpha&=0,1,2,
		\end{align}
		one of which is the Hamiltonian itself. By convention, $F^{(0)}=H$.
		\item
		A constant of motion is \emph{second order} if it is of the form
		\begin{equation}
			\label{eq:quadratic}
			F^{(\alpha)}=K^{(\alpha)}+V^{(\alpha)},
		\end{equation}
		where
		\[
		K^{(\alpha)}(\mathbf p,\mathbf 
		q)=\sum_{i=1}^nK^{(\alpha)}_{ij}(\mathbf q)p^ip^j
		\]
		is quadratic in momenta and $V^{(\alpha)}=V^{(\alpha)}(\mathbf q)$
		a function depending only on positions.
		\item
		A superintegrable system is \emph{second order} if its constants
		of motion $F^{(\alpha)}$ are second order and 
		if~\eqref{eq:Hamiltonian} is given by the Riemannian metric 
		$g_{ij}(\mathbf q)$ on the underlying manifold.
		\item
		We call $V$ a \emph{superintegrable potential} if the Hamiltonian
		\eqref{eq:Hamiltonian} defines a superintegrable system.
	\end{enumerate}
\end{definition}

\noindent Analogous to the case of conformal superintegrability, we obtain from the condition~\eqref{eq:integral} for a second-order constant of the motion $F^{(\alpha)}$:

The component of~\eqref{eq:integral} cubic in momenta is $\{K^{(\alpha)},G\}=0$ and implies that $K_{ij}$ are components of a Killing tensor.
\begin{definition}
		A (second order) \emph{Killing tensor} is a symmetric tensor field on a
		Riemannian manifold satisfying the Killing equation
		\begin{equation}
			\label{eq:Killing}
			K_{ij,k}+K_{jk,i}+K_{ki,j}=0.
		\end{equation}
\end{definition}

The component of~\eqref{eq:integral} linear in momenta is $\{K^{(\alpha)},V\}+\{V^{(\alpha)},G\}=0$ and implies
\[
		dV^{(\alpha)}=K^{(\alpha)}dV.
\]
Differentiation once more, one finds
\begin{equation}\label{eq:dKdV}
		d(K^{(\alpha)}dV)=0
\end{equation}
analogous to~\eqref{eq:dCdV}.
Here we have again identified symmetric forms and endomorphisms using the metric $g$.
In index notation, we have
\begin{equation}
		\label{eq:dKdV:ij}
		\young(i,j)
		\bigl(K\indices{^m_i}V_{,jm}+K\indices{^m_{i,j}}V_{,m}\bigr)=0.
\end{equation}
In local coordinates \eqref{eqn:local.coordinates}, we obtain the condition
\begin{equation}
		\label{eq:dKdV.local}
		K_{11} V_{ww} - K_{22} V_{zz}
		+ \left( \frac32\,K_{11w} - 2\phi^{-1}\phi_wK_{11} \right) V_w
		- \left( \frac32\,K_{22z} - 2\phi^{-1}\phi_zK_{22} \right) V_z = 0
\end{equation}
after substituting the conditions for $K_{ij}$ being a Killing tensor.

\subsection{Non-degenerate systems}\label{sec:proper.non-degeneracy}
Analogous to the conformal case, a second-order conformally superintegrable system is said to be \emph{irreducible} if the space $\mathcal{K}$ of Killing tensors associated to it via the integrals of motion forms an irreducible set of endomorphisms.
Now consider~\eqref{eq:dKdV}. For an irreducible system, one can solve~\eqref{eq:dKdV} for all second derivatives of the potential $V$, except for the Laplacian $\Delta V$, and the solution can be written in terms of the Killing tensors and their derivatives. One therefore obtains an expression of the form
\begin{equation}\label{eq:Wilczynski}
	V_{,ij} = T\indices{_{ij}^k}V_{,k} + \frac1n\,g_{ij}\Delta V\,,
\end{equation}
where the tensor $T_{ijk}$ only depends on the space spanned by the Killing tensors $K^{(\alpha)}$.

Analogously to Definition~\ref{def:non-degenerate.conformal}, we define the non-degeneracy of 2-dimensional irreducible second-order \emph{properly} superintegrable systems as follows.

\begin{definition}\label{def:non-degenerate.proper}
	Consider the quadruple $(M,g,\mathcal V,\mathcal F)$ composed of a smooth 2-dimensional manifold $M$ with metric $g$, a 3-dimensional space of integrals $\mathcal F$ and a 4-dimensional space of potentials $\mathcal V$. We say that it is \emph{non-degenerate} if, for any $V\in\mathcal V$, and any $F\in\mathcal F$, Equations~\eqref{eq:dKdV} and~\eqref{eq:Wilczynski} are satisfied.
	
	An irreducible second-order properly superintegrable system is called \emph{non-degenerate} if such a non-degenerate quadruple exists where $\mathcal V$ and $\mathcal K$ include, respectively, its potential and associated Killing tensors.
\end{definition}

Because of the symmetries of $V_{,ij}$, and again analogously to the case of conformally superintegrable systems, we infer that the structure tensor $T$ in~\eqref{eq:Wilczynski} of a non-degenerate proper system is unique~\cite{KSV2023}, and that it satisfies
\[
	\young(i,j)\,T\indices{_{ij}^k}=0\,,
	\qquad
	T\indices{_a^{ak}} = 0\,.
\]

Resubstituting~\eqref{eq:Wilczynski} into~\eqref{eq:dKdV}, one finds
\begin{equation}
	\label{eq:prolongation:K}
	K_{ij,k}=\frac13\young(ji,k)T\indices{^m_{ji}}K_{mk},
\end{equation}
again in a manner analogous to the conformal case, see also~\cite{KSV2023}.

In the following we profit from a result shown in \cite{KSV2024} for \emph{non-degenerate} systems, mentioned earlier in Remark~\ref{rmk:tau=0}.
It allows us to consider the proper system as a conformal system and replace the Killing tensors (except the metric) by trace-free conformal Killing tensors. The integrability conditions for~\eqref{eq:Wilczynski} and~\eqref{eq:prolongation:K} then follow immediately from those of~\eqref{eq:conformal.Wilczynski} and~\eqref{eqn:prolongation.C}, setting $\tau=0$. Moreover, \eqref{eqn:raw.tau} provides us with an additional constraint on $T$.

\subsection{Integrability conditions}\label{sec:proper.integrability}

Due to Remark~\ref{rmk:tau=0}, we are able to directly formulate the integrability conditions for $V$ and $K_{ij}$ analogously to the case of conformal superintegrability. We find the following non-linear prolongation for $T_{ijk}$.

\begin{proposition}\label{prop:SIC:K.NLP}
For a non-degenerate second-order properly superintegrable system in two dimensions, the structure tensor satisfies the prolongation (``closed system'')
\begin{subequations}\label{eqn:NLP.abstract}
	\begin{align}
		\Delta t &= \frac32\,R + \frac13\,S^{abc}S_{abc}
		\label{eqn:Delta.t.2D}
		\\
		\mathring\nabla^2_{ij}t &=
		\frac13S_{ija}t^a
		+ \frac43\,\left( t_it_j \right)_\circ
		- \frac12\,\Xi_{ij}
		\label{eqn:tij.2D}
		\\
		\nabla_lS_{ijk}
		&= \frac16\,{\young(ijk)}_\circ \left(
			-\frac23\,S_{ijk}t_l
			+2\,t_iS_{jkl}
			+ \Xi_{ij}g_{kl} 
		\right)
		\label{eqn:DS.2D}
		\\
		\nabla_k\Xi_{ij}
		&= \frac13\,S_{ijk}S^{abc}S_{abc}
		-\frac29\,\young(ijk)\,(\Xi_{jk}t_i-\frac12\,\Xi_{ka}t^a)
		+\frac23\,{\young(ij)}_\circ\,g_{jk}S_{iab}\Xi^{ab}
		\label{eqn:DXi.2D}
	\end{align}
\end{subequations}
and the algebraic condition
\begin{equation}\label{eqn:Remn}
	\frac23\,\Xi_{ia}t^a + \frac49\,S_{iab}t^at^b + \frac59\,S_{abc}S^{abc}t_i - S_{iab}\Xi^{ab}
	= \frac12\,\nabla_iR - Rt_i\,.
\end{equation}
\end{proposition}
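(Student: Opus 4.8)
The key observation is that, by Remark~\ref{rmk:tau=0}, a non-degenerate \emph{proper} system can be regarded as a non-degenerate conformal system satisfying the additional constraint $\bigtau_{ij}=0$. The plan is therefore to take each structural equation already established for the general conformal case in Proposition~\ref{prop:structural.equations.conformal} (together with Lemma~\ref{la:hess.t} and Equation~\eqref{eq:Del.t.general}) and simply set $\bigtau_{ij}=0$. First I would note that \eqref{eqn:Delta.t.2D} is literally \eqref{eq:Del.t.general}, which holds for any non-degenerate conformal system and is unaffected by the proper specialisation. Equation \eqref{eqn:DS.2D} is likewise identical to \eqref{eq:DS.general}, since that prolongation involves only $S$, $\Xi$ and $t$ and does not reference $\bigtau$; and \eqref{eqn:DXi.2D} coincides with \eqref{eq:DXi.general} for the same reason. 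Thus three of the four prolongation equations carry over verbatim and require no new computation.

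The two places where $\bigtau_{ij}=0$ actually does work are the Hessian formula and the divergence identity. For \eqref{eqn:tij.2D}, I would start from Lemma~\ref{la:hess.t}, which expresses $\bigtau_{ij}$ in terms of the trace-free part of $\nabla_jt_i$, the term $S_{ija}t^a$ and $\Xi_{ij}$. Setting $\bigtau_{ij}=0$ and solving for the trace-free Hessian $\mathring\nabla^2_{ij}t$ gives exactly \eqref{eqn:tij.2D}: the factors $\tfrac13$, $\tfrac43$ and $-\tfrac12$ are obtained by inverting the coefficient $\tfrac13$ in front of $\nabla_jt_i$ in \eqref{eq:tau.gen.transformed} and redistributing the remaining terms. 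For the algebraic condition \eqref{eqn:Remn}, I would take the divergence identity \eqref{eq:divTau.general} and impose $\bigtau_{ij}=0$, which kills both $\nabla^a\bigtau_{ak}$ on the left and the term $-S_{kab}\bigtau^{ab}$ on the right; rearranging the surviving terms and moving the curvature terms to the right-hand side yields precisely \eqref{eqn:Remn}. This is where properness genuinely constrains the data: in the conformal case $\div(\bigtau)$ is free, whereas here its vanishing turns \eqref{eq:divTau.general} from a differential identity into an algebraic obstruction on $S$, $\Xi$, $t$ and $R$.

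The main subtlety I anticipate is \textbf{bookkeeping rather than conceptual difficulty}: one must verify that the specialisation is consistent, i.e.\ that setting $\bigtau_{ij}=0$ in the various conformal identities does not produce contradictory constraints and that the resulting system is genuinely closed (of finite type). Concretely, I would check that after substituting $\bigtau_{ij}=0$ into the general-gauge equations, the previously ``open'' equation \eqref{eq:aleph.w.flat} — which in the conformal flat-gauge Proposition~\ref{prop:structural.equations.conformal.flat} left $\partial\smalltau/\partial z$ undetermined — is now fully determined, so that $\Xi$ becomes prolonged by \eqref{eqn:DXi.2D} alone. The remark following Proposition~\ref{prop:structural.equations.conformal.flat} already flags this: setting $\smalltau=0$ closes the prolongation, and \eqref{eqn:Remn} is the coordinate-free incarnation of that closure. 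I would therefore conclude by remarking that \eqref{eqn:Delta.t.2D}–\eqref{eqn:DXi.2D} express all first derivatives of $(t,\nabla t, S,\Xi)$ in terms of these quantities themselves and $R$, while \eqref{eqn:Remn} records the one algebraic relation that properness forces among the initial data.
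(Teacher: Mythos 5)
Your proposal is correct and follows essentially the same route as the paper, whose proof consists precisely of setting $\bigtau_{ij}=0$ in Propositions~\ref{prop:structural.equations.conformal.standard}, \ref{prop:structural.equations.conformal.flat} and~\ref{prop:structural.equations.conformal} and extracting~\eqref{eqn:Remn} from the divergence identity~\eqref{eq:divTau.general}. Your additional observations --- that three of the four prolongation equations carry over verbatim, that~\eqref{eqn:tij.2D} is Lemma~\ref{la:hess.t} solved for the trace-free Hessian, and that~\eqref{eqn:Remn} is the closure condition replacing the undetermined $\partial\smalltau/\partial z$ of the conformal case --- merely make explicit what the paper leaves implicit.
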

\noindent Two aspects of the prolongation equations in Proposition~\ref{prop:SIC:K.NLP} deserve to be mentioned in contrast to their conformal counterpart in Proposition~\ref{prop:structural.equations.conformal}: first, while the equations in the conformal case form a closed system. Second, the initial data of the conformal system is not subject to algebraic conditions, but the proper system is subjected to~\eqref{eqn:Remn}.
\begin{proof}
	This follows immediately from Propositions~\ref{prop:structural.equations.conformal.standard}, \ref{prop:structural.equations.conformal.flat} and~\ref{prop:structural.equations.conformal} after setting $\bigtau=0$.
	In particular, \eqref{eqn:Remn} is obtained from~\eqref{eq:divTau.general}.
\end{proof}

Let $g=\phi^2dzd\bar z$ denote the metric in local coordinates~\eqref{eqn:local.coordinates}.
Moreover, let
\begin{align*}
	S &= s dz^3+\bar sd\bar z^3\,,
	&
	\Xi &= \xi dz^2+\bar\xi d\bar z^2\,.
\end{align*}
Then Equation~\eqref{eqn:Remn} becomes
\begin{equation}\label{eqn:Remn.complex}
 		\frac{80}{9}s\bar s t_1
 		+\frac{16}{9}st_{\bar z}^2
	 	-4 s\bar\xi
 		+\frac43\xi t_{\bar z}
 		= \frac12\phi^2R_z - R\phi^2 t_1\,.
\end{equation}
We see here that this equation cannot be expressed simply using the complex functions specifying the structure tensor, i.e.~the functions $s$ and $\xi$ as well as $t_1$ and $t_2$. Indeed, \eqref{eqn:Remn.complex} involves the (real-valued) functions $\phi$ and $R$ as well as the (complex-valued) derivatives of $R$.

\begin{lemma}
	(i) The differential consequences of~\eqref{eqn:Remn} are given by
	\begin{equation}\label{eqn:DRemn}
		\frac{27}{2}\,\lVert\Xi\rVert^2+\lVert S\rVert^4-32\,S_{abk}\Xi^{ab}t^k 
		+ \frac43\,\Xi_{ab}t^at^b + \frac{16}9\,S_{abc}t^at^bt^c + \frac{37}9\,\lVert S\rVert^2\lVert t\rVert^2
		+ 2R\lVert t\rVert^2 = 0\,
	\end{equation}
	and its differential consequences,
	where
	\begin{align*}
		\lVert S\rVert^2&=S^{abc}S_{abc}\,,
		&
		\lVert t\rVert^2&=t^at_a\,,
		&
		\lVert\Xi\rVert^2&=\Xi^{ab}\Xi_{ab}\,.
	\end{align*}

	\noindent (ii) If $R=0$, then the integrability conditions are precisely
	\begin{align*}
		\frac23\,\Xi_{ia}t^a + \frac49\,S_{iab}t^at^b + \frac59\,\lVert S\rVert^2 t_i - S_{iab}\Xi^{ab}
		&= 0
		\\
		\frac{27}{2}\,\lVert\Xi\rVert^2+\lVert S\rVert^4-20\,\Xi_{ab}t^at^b -\frac{112}{9}\,S_{abc}t^at^bt^c -\frac{123}{9}\,\lVert S\rVert^2\lVert t\rVert^2
		&= 0
	\end{align*}
\end{lemma}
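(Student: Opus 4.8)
The plan is to read \eqref{eqn:Remn} as the equation fixing the gradient of the Gauss curvature. Setting
\[
  Q_i := \tfrac23\,\Xi_{ia}t^a + \tfrac49\,S_{iab}t^at^b + \tfrac59\,\lVert S\rVert^2\,t_i - S_{iab}\Xi^{ab} + R\,t_i\,,
\]
Equation \eqref{eqn:Remn} reads $\nabla_iR = 2Q_i$. The closed prolongation \eqref{eqn:NLP.abstract} already determines $\nabla_jt_i$ (from \eqref{eqn:tij.2D} and \eqref{eqn:Delta.t.2D}), $\nabla_lS_{ijk}$ (from \eqref{eqn:DS.2D}) and $\nabla_k\Xi_{ij}$ (from \eqref{eqn:DXi.2D}) algebraically in $S,\Xi,t,R$; the one first derivative it does not supply is $\nabla_iR$, which is exactly what \eqref{eqn:Remn} provides. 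Hence the only integrability condition at this order is that the Hessian $\nabla_j\nabla_iR = 2\nabla_jQ_i$ of the scalar $R$ be symmetric, i.e.
\[
  \young(i,j)\bigl(\nabla_jQ_i\bigr) = 0\,.
\]
In dimension two this antisymmetric part is a single scalar, which I claim is \eqref{eqn:DRemn}.

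To identify that scalar I would insert the prolongation into $\nabla_jQ_i$ and reduce. The leading terms come from differentiating $-S_{iab}\Xi^{ab}$: by \eqref{eqn:DS.2D} the factor $\nabla_jS_{iab}$ pairs a $\Xi$ against $\Xi^{ab}$ and yields $\lVert\Xi\rVert^2$, while by \eqref{eqn:DXi.2D} the factor $\nabla_j\Xi^{ab}$ pairs $S\lVert S\rVert^2$ against $S_{iab}$ and yields $\lVert S\rVert^4$. The lower-order terms arise from differentiating the $\Xi t$, $St^2$ and $\lVert S\rVert^2 t$ contributions and from the curvature piece, for which $\nabla_j(R\,t_i)=2Q_j t_i + R\,\nabla_j t_i$ with $\nabla_j t_i$ symmetric. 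All resulting index contractions collapse under the two-dimensional identities \eqref{eq:S.identity}, \eqref{eq:Z.identity} and the companion relation $S_{ija}Z\indices{^a_k}=\tfrac12{\young(ij)}_\circ S_{iab}Z^{ab}g_{jk}$, so that every term reduces to one of $\lVert\Xi\rVert^2$, $\lVert S\rVert^4$, $S_{abk}\Xi^{ab}t^k$, $\Xi_{ab}t^at^b$, $S_{abc}t^at^bt^c$, $\lVert S\rVert^2\lVert t\rVert^2$ or $R\lVert t\rVert^2$; contracting the antisymmetric part against the area form and reading off the coefficient gives \eqref{eqn:DRemn}, and every higher consequence of \eqref{eqn:Remn} then agrees, modulo \eqref{eqn:NLP.abstract}, with a derivative of \eqref{eqn:DRemn}. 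This proves part~(i). The \textbf{main obstacle} is purely the bookkeeping of this large antisymmetrised contraction and the verification of the rational coefficients, which I would carry out in \texttt{cadabra2}; the conceptual step is only the recognition that the obstruction is the \emph{antisymmetric part} of $\nabla Q$.

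For part~(ii) I would put $R=0$. Then \eqref{eqn:Remn} loses its $\nabla_iR$ and $Rt_i$ terms and becomes the first displayed identity. Contracting that covector identity with $t^i$ yields
\[
  S_{abk}\Xi^{ab}t^k = \tfrac23\,\Xi_{ab}t^at^b + \tfrac49\,S_{abc}t^at^bt^c + \tfrac59\,\lVert S\rVert^2\lVert t\rVert^2\,.
\]
Substituting this into \eqref{eqn:DRemn} with $R=0$, the term $-32\,S_{abk}\Xi^{ab}t^k$ converts into multiples of $\Xi_{ab}t^at^b$, $S_{abc}t^at^bt^c$ and $\lVert S\rVert^2\lVert t\rVert^2$, and collecting coefficients ($-\tfrac{64}{3}+\tfrac43=-20$, $-\tfrac{128}{9}+\tfrac{16}{9}=-\tfrac{112}{9}$, $-\tfrac{160}{9}+\tfrac{37}{9}=-\tfrac{123}{9}$) reproduces exactly the second displayed identity. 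A final routine check, again via \eqref{eqn:NLP.abstract}, shows that for $R=0$ the further differential consequences of this identity hold automatically, so the two displayed identities are the complete set of integrability conditions.
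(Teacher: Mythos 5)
Your proposal is correct and follows essentially the same route as the paper: both obtain \eqref{eqn:DRemn} as the single scalar obstruction coming from the symmetry of the Hessian of $R$ after differentiating \eqref{eqn:Remn} and substituting the closed prolongation \eqref{eqn:NLP.abstract}, and both derive the second identity of part~(ii) by contracting \eqref{eqn:Remn} with $t^i$ at $R=0$ and eliminating $S_{abk}\Xi^{ab}t^k$ from \eqref{eqn:DRemn} (your coefficient bookkeeping $-20$, $-\tfrac{112}{9}$, $-\tfrac{123}{9}$ matches). The paper likewise concludes part~(ii) by checking that the further covariant derivative of \eqref{eqn:DRemn} is proportional to $R$ and $\nabla R$ and hence vanishes identically when $R=0$.
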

\begin{proof}
	Consider~\eqref{eqn:Remn}. We take the differential and obtain, due to the symmetry of the Hessian of $R$,
	\begin{equation}\label{eq:ddR=0}
		\frac{27}{2}\,\Xi_{ab}\Xi^{ab}+S^4-30\,S_{abk}\Xi^{ab}t^k + 3\,S^2t^at_a + t^a\nabla_aR = 0\,.
	\end{equation}
	which is simply an algebraic condition in $\Xi$, $S$ and $\nabla t$.
	The first claim then follows since, after having taken another derivative, we may replace any derivatives of these objects using~\eqref{eqn:NLP.abstract} and~\eqref{eqn:Remn}.
	
	For the second claim, take a further covariant derivative of~\eqref{eqn:DRemn} and obtain
	\begin{equation}\label{eqn:DDRemn}
		2\lVert t\rVert^2\,\nabla_lR + 2t_l\,R^2+\left( 3\lVert S\rVert^2\,t_l-14\,S_{lab}\Xi^{ab} + \frac{16}{9}S_{lab}t^at^b \right)R = 0\,.
	\end{equation}
	It follows that, in the case of vanishing curvature, the only algebraic conditions are~\eqref{eqn:Remn} and~\eqref{eqn:DRemn}, which are algebraic in the structure tensors $S$, $\Xi$ and $dt$. Note that~\eqref{eqn:DRemn} then simplifies using~\eqref{eqn:Remn}.
\end{proof}

The lemma states that the integrability conditions for a proper system are given by~\eqref{eqn:Remn} as well as~\eqref{eqn:DRemn} and all the differential consequences of the latter, which is a \emph{single} equation.
Moreover, if $R=0$, only three equations need to be considered. A solution in a point $x_0\in M$ can thus locally be extended to a solution of~\eqref{eqn:NLP.abstract}. We will demonstrate in the next section that this leads to the known structural equations for flat surfaces.

Next reconsider~\eqref{eqn:DDRemn} assuming $R$ is constant with $R\ne0$. We may then take either $R=+1$ or $R=-1$, making~\eqref{eqn:Remn} an algebraic equation in the structure tensors alone. This case will be discussed further in Section~\ref{sec:sphere}, while a full solution is left for future work.

The case of non-constant $R$ is more involved, but in this case one may interpret~\eqref{eqn:Remn} as an equation for $\nabla R$,
\begin{equation}\label{eqn:dR}
	\nabla_iR
	= \frac43\,\Xi_{ia}t^a + \frac89\,S_{iab}t^at^b + \frac{10}{9}\,S_{abc}S^{abc}t_i - 2S_{iab}\Xi^{ab}
	+ 2Rt_i\,.
\end{equation}
This makes~\eqref{eqn:DRemn} and~\eqref{eqn:DDRemn} algebraic in the structure tensors $S$, $\Xi$ and $dt$ \emph{and} the curvature $R$.
Taking a further covariant derivative, using the Ricci identity, and employing \eqref{eqn:NLP.abstract} and~\eqref{eqn:dR} to substitute derivatives of $S$, $\Xi$, $dt$ and $R$, we are then left with solving purely algebraic equations for $S$, $\Xi$, $dt$ and $R$. A further study of these equations is left for future research.

\section{Two-dimensional Euclidean space}\label{sec:euclidean}

Let us now restrict to flat space, $R=0$, and compare the above equations with the expressions in the literature \cite{Kress&Schoebel,KKM07b}.
We take the metric in the form $g=dzdw$, and have thus
\begin{equation}
\label{eqn:Kress.Schoebel.Wilczynski}
 \begin{pmatrix}
  V_{zz} \\ V_{ww}
 \end{pmatrix}
 =
 \begin{pmatrix}
  T_{112} & T_{111} \\
  T_{222} & T_{221}
 \end{pmatrix}
 \begin{pmatrix}
  V_z \\ V_w
 \end{pmatrix}
 =
 2\,\begin{pmatrix}
  t_1 & s_1 \\
  s_2 & t_2
 \end{pmatrix}
 \begin{pmatrix}
  V_z \\ V_w
 \end{pmatrix}
\end{equation}
On the other hand, \cite{Kress&Schoebel} write
\begin{equation}
 \label{eqn:Kress.Schoebel.Wilczynski.ABCD}
  \frac23\,
  \begin{pmatrix}
   V_{zz} \\ V_{ww}
  \end{pmatrix}
  =
  \begin{pmatrix}
   C_{11} & C_{12} \\ C_{21} & C_{22}
  \end{pmatrix}
  =
  \frac1D\,
  \begin{pmatrix}
   -D_z & A_z \\ B_w & -D_w
  \end{pmatrix}
\end{equation}
Note that these equations do not imply any constraints on $\Xi$, nor on $\nabla^aS_{ija}$.
Thus, in coordinates,
\[
 \partial_zS_{222}\,,\quad \partial_wS_{111}
\]
remain unspecified for the time being.
However, we also have~\eqref{eqn:NLP.abstract} and \eqref{eqn:Remn}.
Equation~\eqref{eqn:tij.2D} together with~\eqref{eqn:DS.2D} provides us with the equations (8)---(13) in~\cite{KKM07b}.
\bigskip

\noindent Let us discuss Equations~\eqref{eqn:Delta.t.2D}--\eqref{eqn:DS.2D} in more detail.
Firstly, Equation~\eqref{eqn:Delta.t.2D} corresponds to the Pl\"ucker relation (3.11a) in~\cite{Kress&Schoebel}.
Indeed, define
\begin{equation}
	\mathsf{d} = \exp\left(-\frac43t\right)
\end{equation}
and
\begin{equation}
	\mathsf{s}_1=\frac{2}{3}\,S_{111}\mathsf{d}
	\qquad\text{and}\qquad
	\mathsf{s}_2=\frac{2}{3}\,S_{222}\mathsf{d}\,,
\end{equation}
as an ansatz. Note that $\mathsf{s}_1$ and $\mathsf{s}_2$ are almost identical to $\beta_1,\beta_2$ in~\eqref{eq:beta}, see also Proposition~\ref{prop:holomorphic.A}.
Using the ansatz, Equation~\eqref{eqn:Delta.t.2D}, i.e.~$2t_{,zw} = \frac13\,S_{111}S_{222}$, rewrites as
\[
	\mathsf{s}_1\mathsf{s}_2 = \mathsf{d}_z\mathsf{d}_w-\mathsf{d}\mathsf{d}_{zw}\,,
\] 
and we observe that it coincides with the Plücker relation (3.11a) in \cite{Kress&Schoebel}.
Next, we insert~\eqref{eqn:Kress.Schoebel.Wilczynski} into~\eqref{eqn:tij.2D}, \eqref{eqn:Delta.t.2D} and~\eqref{eqn:DS.2D}.
In this way we confirm (i)--(v) in \cite{Kress&Schoebel}:
\begin{itemize}
	\item Equation~\eqref{eqn:tij.2D} corresponds to the equations
	\[
	(i)\quad C_{11,z} = C_{12}C_{22} + (C_{11})^2 - C_{12,w}\,,
	\qquad
	(ii)\quad C_{22,w} = C_{21}C_{11} + (C_{22})^2 - C_{21,z}\,.
	\]
	%
	\item Equation~\eqref{eqn:Delta.t.2D} corresponds to the equations
	\[
	(iii)\quad C_{11,w} = C_{12}C_{21} = C_{22,z}\,.
	\]
	\item Equation~\eqref{eqn:DS.2D} corresponds to
	\[
	(iv)\quad C_{12,z} = C_{12}C_{11}\,,\qquad
	(v)\quad C_{21,w} = C_{21}C_{22}\,.
	\]
\end{itemize}
In~\cite[(3.7)]{Kress&Schoebel}, we find the relations
\[
 A_{zz} = 0\,,\quad
 A_{zw} = D_{zz}\,,\quad
 B_{ww} = 0\,,\quad
 B_{wz} = D_{ww}\,.
\]
These can be identified with, respectively, the equations $(iv)$, $(i)$, $(v)$ and $(ii)$. We have also already encountered these in Proposition~\ref{prop:holomorphic.A} where we discussed the holomorphic function $\beta$.
Subsequently, we establish the correspondence
\[
 \frac{D_{,i}}{D} = -\frac43\,t_i = \ln(\mathsf{d})_{,i}
\]
and obtain the identity $D = \mathsf{d}$, as the integration constant is irrelevant.
For the remaining derivatives we have
\begin{align*}
 C_{211z} &= C_{211}C_{11} +C_{11}^2C_{21} -2C_{21}C_{122} +2C_{12}C_{21}C_{22} \\
 C_{211w} &= C_{211}C_{22} + C_{21}^2C_{12} \\
 C_{122z} &= C_{122}C_{11} + C_{12}^2C_{21} \\
 C_{122w} &= C_{122}C_{22} +C_{22}^2C_{12} -2C_{12}C_{211} +2C_{21}C_{12}C_{11}
\end{align*}
which correspond to~\eqref{eqn:DXi.2D}.
We have thus established the following correspondences between the equations~\eqref{eqn:NLP.abstract} and equations in \cite{Kress&Schoebel,Kalnins&Kress&Miller,KKM07b}, namely
\begin{align*}
 D &= \exp\left(-\frac43\,t\right) \\
 A_z &= \frac32\,D\,S_{111} = \frac32\,S_{111}\,e^{-\frac43\,t} \\
 B_w &= \frac32\,D\,S_{222} = \frac32\,S_{222}\,e^{-\frac43\,t} \,.
\end{align*}
Employing Proposition~\ref{prop:holomorphic.A} and the considerations after it, we infer
\begin{equation}\label{eq:AB}
	A_{zz} = 0\,,\quad
	A_{zw} = D_{zz}\,,\qquad
	B_{zw} = D_{ww}\,,\quad	
	B_{ww}=0\,,
\end{equation}
which are precisely the equations (3.7) of \cite{Kress&Schoebel}. 
Summarising further, we similarly have the correspondences
\begin{align*}
	C_{11} &= \frac43t_1 & C_{12} &= \frac43 s_1
	& C_{112} &= \frac23\xi_2+\frac89s_2t_1-\frac83\frac{\phi_{\bar z}}{\phi}t_{\bar z}
	\\
	C_{21} &= \frac43s_2 & C_{22} &= \frac43 t_{\bar z}
	& C_{122} &= \frac23\xi_1+\frac89s_1t_{\bar z}-\frac83\frac{\phi_{z}}{\phi}t_{z}
\end{align*}
between the terminology employed in the present paper and that in \cite{Kress&Schoebel,Kalnins&Kress&Miller,KKM07b}.
From~\eqref{eq:AB} we immediately infer two further equations,
\[
	D_{zzz}=0\,,\quad
	D_{www}=0\,
\]
which indeed correspond to~\eqref{eqn:Remn} in the case $R=0$. We have encountered them already after Proposition~\ref{prop:holomorphic.A}.
A corresponding pair of equations can also be formulated in terms of $C_{ij}$, see in \cite{Kress&Schoebel,Kalnins&Kress&Miller,KKM07b}.
Comparing with these existing works, we observe that one further condition exists, namely
\[
	D_{zzww} = 0\,,
\]
and this equation indeed is~\eqref{eqn:DRemn} with $R=0$. This can similarly be phrased in terms of a quartic equation in $C_{ij}$, as shown in \cite{Kress&Schoebel,KKM07b}. A direct computation shows that, with $R=0$, any subsequent differential consequence of~\eqref{eqn:DRemn} is already algebraically generated by~\eqref{eqn:NLP.abstract}, \eqref{eqn:Remn} and~\eqref{eqn:DRemn} itself.
We have therefore confirmed consistency with~\cite{Kress&Schoebel,KKM07b} and seen how the three different terminologies compare.

\section{The two-sphere}
\label{sec:sphere}

In this final section, we consider the unit 2-sphere $\mathds
S^2\subset\mathbb R^3$ with the round metric $g$, induced from the standard
scalar product $\hat g$ on $\mathbb R^3$.  It has scalar curvature $R=2$, such
that \eqref{eqn:Remn} reads
\begin{equation}
	\frac23\,\Xi_{ia}t^a + \frac49\,S_{iab}t^at^b + \frac59\,S_{abc}S^{abc}t_i - S_{iab}\Xi^{ab}
	= -2t_i\,.
	\label{eqn:Remn.sphere}
\end{equation}
Note that this is a set of algebraic equations on each tangent space.  In what
follows we will show how to transform this into an explicit set of purely algebraic
equations.  The latter then equip the set of superintegrable systems on
$\mathds S^2$ with the structure of an algebraic variety, invariant under a
linear action of the isometry group $\operatorname{SO}(3)$.

A \emph{special conformal Killing tensor} is a symmetric tensor $L_{ij}$
satisfying Sinyukov's equation,
\begin{subequations}
	\begin{equation}
		L_{ij,k}=\lambda_ig_{jk}+\lambda_jg_{ik},
		\label{eq:Sinyukov}
	\end{equation}
	where
	\begin{equation}
		\lambda_k=\tfrac12\nabla_k\tr L,
	\end{equation}
	as can be seen from contracting $i$ and $j$ in \eqref{eq:Sinyukov}.
\end{subequations}
Sinyukov's equation can be extended to a prolongation, expressing the first
(and hence all) derivatives of $L_{ij}$ and $\lambda_k$ linearly in $L_{ij}$
and $\lambda_k$ \cite{Sinyukov_1994,MS1983,Mikes1998}.  For spheres this
prolongation simplifies to
\begin{align}
	L_{ij,k}&=\lambda_ig_{jk}+\lambda_jg_{ik}\\
	\lambda_{i,j}&=-L_{ij}-2\lambda g_{ij}.
	\label{eq:Sinyukov:prolongation}
\end{align}
Since $\lambda_k$ is determined by $L_{ij}$, these equations imply that the
maximal dimension of the space of special conformal Killing tensors on
$\mathds S^n$ is $n(n+1)/2$.  On the other hand, it is not difficult to check
that the restriction of a constant symmetric tensor $\hat L$ on $\mathbb
R^{n+1}$ to the sphere $\mathds S^n\subset\mathbb R^{n+1}$ is a special
conformal Killing tensor.  Explicitly, we have, for	$x\in\mathds S^n$ and
$v,w\in T_xS^n$ considered as vectors in $x,v,w\in\mathbb R^{n+1}$ with
$\hat g(x,x)=1$ and $v,w\perp x$,
\begin{align}
	L_x(v,w)&=\hat L(v,w)&
	\lambda_x(v)&=-\hat L(v,x)
	\label{eq:restriction}
\end{align}
As this restriction is injective, we see from its dimension that the space of
special conformal Killing tensors on $\mathds S^n$ is isomorphic to
$S^2\mathbb R^{n+1}$.  Note that the metric $g$ on $\mathds S^n$ is a special
Killing tensor, induced from $\hat g$ on $\mathbb R^{n+1}$.

Every special conformal Killing tensor $L$ gives rise to a proper Killing
tensor
\begin{equation}
	K=L-(\tr L)g,
	\label{eq:L2K}
\end{equation}
which we call a \emph{special Killing tensor}.  Since $\tr K=(1-n)\tr L$, this
defines a bijection between the space of special conformal Killing tensors and
that of special Killing tensors.  In dimension $n=2$, the latter has the same
dimension as the space of all Killing tensors.  Hence on $\mathds S^2$ every
Killing tensor $K$ arises from a special conformal Killing tensor $L$ via
\eqref{eq:L2K} and thus from the restriction of a constant tensor $\hat L\in
S^2\mathbb R^3$.

From Section~\ref{sec:preliminaries} we know that the space of Killing tensors
in a non-degenerate superintegrable system determines its structure tensor and
hence the entire superintegrable system.  In two dimensions, this space is
spanned by three linearly independent Killing tensors $K^{(0)}$, $K^{(1)}$ and
$K^{(2)}$, given by $\hat L^{(0)},\hat L^{(1)},\hat L^{(2)}\in S^2\mathbb
R^3$.  We can choose one of them to be the metric, i.e. $K^{(0)}=g$ and $\hat
L^{(0)}=\hat g$, and therefore assume that $\hat L^{(1)}$ and $\hat L^{(2)}$
be trace-free.

To summarise, a superintegrable system on $\mathds S^2$ is completely
determined by a two-dimensional subspace $\langle\hat L^{(1)},\hat
L^{(2)}\rangle$ in the space $S^2_0\mathbb R^3$ of trace-free symmetric
tensors on $\mathbb R^3$ or, equivalently, a point in the Grassmannian
$G_2(S_0\mathbb R^3)$.  Under the Plücker embedding $G_2(S_0\mathbb
R^3)\hookrightarrow\mathbb P\Lambda^2S_0\mathbb R^3$, this is mapped to the
tensor $\hat P:=\hat L^{(1)}\wedge\hat L^{(2)}$.  Its entries are the Plücker
coordinates of the subspace $\langle\hat L^{(1)},\hat L^{(2)}\rangle$, given by
\begin{equation}
	\hat P_{ijkl}
	=\hat L^{(1)}_{ij}\hat L^{(2)}_{kl}-\hat L^{(2)}_{ij}\hat L^{(1)}_{kl}.
	\label{eq:Plücker:Phat}
\end{equation}
Note that $\dim S_0\mathbb R^3=5$, so that each superintegrable system on
$\mathds S^2$ can be uniquely encoded in a rank one skew symmetric
$5\!\times\!5$\,-matrix.  This can be made explicit in a local basis
$(\partial_z,\partial_w)$ by decomposing $\hat L$ under the decomposition
$T_x\mathbb R^3=T_x\mathds S^n\oplus x$ using \eqref{eq:restriction},
\[
	\hat L=
	\begin{pmatrix}
		-L_{zz}-L_{ww}&-\lambda_z&-\lambda_w\\
		-\lambda_z&L_{zz}&L_{zw}\\
		-\lambda_w&L_{zw}&L_{ww}
	\end{pmatrix},
\]
and assembling the components of this tensor into a vector
\[
	l
	=(\lambda_w,L_{ww},L_{wz},L_{zz},\lambda_z)
	=:(l_{-2},l_{-1},l_0,l_1,l_2).
\]
Then the above mentioned $5\!\times\!5$\,-matrix is given by
\[
	p=l^{(1)}\wedge l^{(2)}
\]
with entries
\begin{align}
	p_{ 0, 1}&=L_{zw}^{(1)}L^{(2)}_{zz}-L_{zw}^{(2)}L^{(1)}_{zz}&
	p_{ 0,-1}&=L_{zw}^{(1)}L^{(2)}_{ww}-L_{zw}^{(2)} L^{(1)}_{ww}\notag\\
	p_{ 0, 2}&=L_{zw}^{(1)}\lambda^{(2)}_z-L_{zw}^{(2)}\lambda^{(1)}_z&
	p_{ 0,-2}&=L_{zw}^{(1)}\lambda^{(2)}_w-L_{zw}^{(2)}\lambda^{(1)}_w\notag\\
	p_{ 1, 2}&=L^{(1)}_{zz}\lambda^{(2)}_z-L^{(2)}_{zz}\lambda^{(1)}_z&
	p_{-1,-2}&=L^{(1)}_{ww}\lambda^{(2)}_w-L^{(2)}_{ww}\lambda^{(1)}_w\label{eq:Plücker:pij}\\
	p_{ 1,-2}&=L^{(1)}_{zz}\lambda^{(2)}_w-L^{(2)}_{zz}\lambda^{(1)}_w&
	p_{-1, 2}&=L^{(1)}_{ww}\lambda^{(2)}_z-L^{(2)}_{ww}\lambda^{(1)}_z\notag\\
	p_{ 1,-1}&=L^{(1)}_{zz}L^{(2)}_{ww}-L^{(2)}_{zz}L^{(1)}_{ww}&
	p_{ 2,-2}&=\lambda^{(1)}_z\lambda^{(2)}_w-\lambda^{(2)}_z\lambda^{(1)}_w.\notag
\end{align}
These functions are the components of the three tensors
\begin{align*}
	P_{ijkl}&=L^{(1)}_{ij}L^{(2)}_{kl}-L^{(2)}_{ij}L^{(1)}_{kl}\\
	P_{ijk}&=L^{(1)}_{ij}\lambda^{(2)}_k-L^{(2)}_{ij}\lambda^{(1)}_k\\
	P_{ij}&=\lambda^{(1)}_i\lambda^{(2)}_j-\lambda^{(1)}_i\lambda^{(2)}_j.
\end{align*}
For instance,
\begin{align*}
	p_{ 1, 2}&=P_{zzz}&
	p_{ 1,-2}&=P_{zzw}\\
	p_{-1, 2}&=P_{wwz}&
	p_{-1,-2}&=P_{www}.
\end{align*}
Note that due to \eqref{eq:Sinyukov:prolongation} the covariant derivatives of
$P_{ijkl}$, $P_{ijk}$ and $P_{ij}$ can be expressed linearly in $P_{ijkl}$,
$P_{ijk}$ and $P_{ij}$.  For example,
\begin{align*}
	P_{ijk,l}
	&=2\young(1,2)\left(L^{(1)}_{ij,l}\lambda^{(2)}_{k}+L^{(1)}_{ij}\lambda^{(2)}_{k,l}\right)\\
	&=2\young(1,2)\left(
		\lambda^{(1)}_i\lambda^{(2)}_kg_{jl}+\lambda^{(1)}_j\lambda^{(2)}_kg_{il}
		-L^{(1)}_{ij}L^{(2)}_{kl}-2L^{(1)}_{ij}\lambda^{(2)}g_{kl}
	\right)\\
	&=P_{ik}g_{jl}+P_{jk}g_{il}-P_{ijkl}-P\indices{_{ij}^a_a}g_{kl}.
\end{align*}
In order to translate Equation~\eqref{eqn:Remn.sphere} to purely algebraic
equations, we rewrite the Bertrand-Darboux condition~\eqref{eq:dKdV:ij} for a
special Killing tensor~\eqref{eq:L2K} in terms of the corresponding special
conformal Killing tensor $L$ as
\[
	d(LdV)=d\lambda\wedge dV
\]
or, in coordinates,
\begin{equation}
	\young(j,k)\left(
		L\indices{^a_j}V_{,ka}
		+L\indices{^a_{j,k}}V_{,a}
		+\lambda_jV_{,k}
	\right)=0,
\end{equation}
Using the Sinyukov Equation~\eqref{eq:Sinyukov}, this simplifies to
\begin{equation}
	\young(j,k)\left(
		L\indices{^a_j}V_{,ka}
		+3\lambda_jV_{,k}
	\right)=0.
\end{equation}
Writing this equation for both, $L^{(1)}$ and $L^{(2)}$, in complex local
coordinates $z$ and $w=\bar z$, we obtain
\[
	\begin{aligned}
		\phi^{-2}\bigl(L^{(1)}_{ww}V_{,zz}-L^{(1)}_{zz}V_{,ww}\bigr)&=3\bigl(\lambda^{(1)}_zV_{,w}-\lambda^{(1)}_wV_{,z}\bigr)\\
		\phi^{-2}\bigl(L^{(2)}_{ww}V_{,zz}-L^{(2)}_{zz}V_{,ww}\bigr)&=3\bigl(\lambda^{(2)}_zV_{,w}-\lambda^{(2)}_wV_{,z}\bigr).
	\end{aligned}
\]
Solving this linear system for $V_{,zz}$ and $V_{,ww}$, we obtain
\begin{equation}
	\begin{pmatrix}
		V_{,zz}\\
		V_{,ww}
	\end{pmatrix}
	=
	3\,\frac{\phi^2}{p_{1,-1}}\,
	\begin{pmatrix}
		p_{-2,1} & p_{1,2} \\
		p_{-2,-1} & p_{-1,2}
	\end{pmatrix}
	\begin{pmatrix}
		V_{,z}\\
		V_{,w}
	\end{pmatrix}.
	\label{eq:V'2V'':pij}
\end{equation}
From this we can now directly read off the components of the
structure tensor:
\begin{align*}
	t_z&=3\phi^2\frac{p_{-2,1}}{p_{1,-1}}&
	t_{\bar z}&=\overline{t_z}&
	s&=3\phi^2\frac{p_{1,2}}{p_{1,-1}}&
\end{align*}
In a stereographic projection $\mathds S^2\setminus\{\text{North
Pole}\}\to\mathbb C$, given by mapping $x=(x_1,x_2,x_3)$ to
\[
	z=\frac{x_1+ix_2}{1-x_3},
\]
the conformal factor is
\[
	\phi=\frac2{1+|z|^2}=1-x_3.
\]
Therefore the components of the structure tensor are rational expressions in
the functions \eqref{eq:Plücker:pij} and the components of
$x=(x_1,x_2,x_3)\in\mathds S^2$.  The same is true for the components of
\[
	\Xi
	=\xi dz^2+\bar\xi d\bar z^2
	=\div(S)+\frac23\,S(dt),
\]
which are determined by
\[
	\xi=2\left(\partial_{\bar z}-\phi z+\tfrac23t_{\bar z}\right)s.
\]
This is true, because
\begin{enumerate}
	\item The $p_{ij}$ are components of the tensors $P_{ijkl}$, $P_{ijk}$ and $P_{ij}$.
	\item The covariant derivatives of $P_{ijkl}$, $P_{ijk}$ and $P_{ij}$ are linear in $P_{ijkl}$, $P_{ijk}$ and $P_{ij}$.
	\item
		The nonzero Christoffel symbols have the form $\pm\phi z$ or $\pm\phi
		\bar z$ and are therefore rational in $z$ and $\bar z$.
\end{enumerate}
Since the stereographic projection is a rational map,
Equation~\eqref{eqn:Remn.sphere} is rational in the functions
\eqref{eq:Plücker:pij} and the components of $x\in\mathds S^2$.  Multiplying
by the least common denominator, we obtain polynomial equations involving $x$
and arbitrary tangent vectors.  Decomposing these polynomials into irreducible
components under the isometry group finally yields algebraic equations
in the Plücker coordinates \eqref{eq:Plücker:Phat}, independent of $x$ and any
tangent vectors.  These define the algebraic variety parametrising all
non-degenerate second order superintegrable systems on the 2-sphere $\mathds
S^2$.

Solving the algebraic equations derived above is outside the scope of the
present paper and will be subject to future publications.  Note that in
solving the condition~\eqref{eqn:Remn.sphere} by translating it to purely
algebraic equations, all its differential consequences are automatically
satisfied.  In the next step, the corresponding ideal can be studied via
algebraic-geometric methods similar to those in~\cite{Kress&Schoebel}, where
the Euclidean plane was investigated in a similar way, leading to the well
established list for second-order superintegrable systems on surfaces of
constant curvature in~\cite{Kalnins&Kress&Pogosyan&Miller}. In this way, the
parameter space is given the structure of an algebraic variety endowed with a
linear isometry action. Among other insights, this revealed a previously
unknown combinatoral structure in the guise of line arrangements, which label
isometry types in the hierarchy of flat superintegrable systems.  Similar
results are to be expected for the 2-sphere applying the methodology developed
here.

\bibliographystyle{amsalpha}
\bibliography{refs}
\medskip

\end{document}